\newtheorem{theorem}{Theorem}[section]
\newtheorem{lemma}[theorem]{Lemma}
\newtheorem{proposition}[theorem]{Proposition}
\newtheorem{corollary}[theorem]{Corollary}
\newtheorem{problem}[theorem]{Problem}
\theoremstyle{definition}
\newtheorem{definition}[theorem]{Definition}
\newtheorem{example}[theorem]{Example}
\theoremstyle{remark}
\newtheorem{remark}[theorem]{Remark}
\numberwithin{equation}{section}
\newcommand{\C}{{\mathbb{C}}}
\newcommand{\Z}{{\mathbb{Z}}}
\newcommand{\LL}{{\mathbb{L}}}
\newcommand{\algt}{\mathfrak{t}}
\begin{document}

\title[Maximal torus actions on GKM manifolds]{Upper bounds for the dimension of tori acting on GKM manifolds}
\date{\today}

\author[S.\ Kuroki]{Shintar\^o Kuroki}
\address{Graduate School of Mathematical Sciences, The University of Tokyo, 3-8-1 Komaba, Meguro-ku, Tokyo, 153-8914, Tokyo, Japan}
\email{kuroki@ms.u-tokyo.ac.jp}

\subjclass[2010]{Principal: 57S25, Secondly: 94C15}

\keywords{GKM graph; GKM manifold; Torus degree of symmetry}

\begin{abstract}
The aim of this paper is to give an upper bound for the dimension of a torus $T$ which acts on a GKM manifold $M$ effectively.
In order to do that, we introduce a free abelian group of finite rank, denoted by $\mathcal{A}(\Gamma,\alpha,\nabla)$, 
from an (abstract) $(m,n)$-type GKM graph $(\Gamma,\alpha,\nabla)$.
Here, an $(m,n)$-type GKM graph is the GKM graph induced from a $2m$-dimensional GKM manifold $M^{2m}$ with an effective $n$-dimensional torus $T^{n}$-action which preserves the almost complex structure, say $(M^{2m},T^{n})$.
Then it is shown that $\mathcal{A}(\Gamma,\alpha,\nabla)$ has rank $\ell(> n)$ if and only if there exists an $(m,\ell)$-type GKM graph 
$(\Gamma,\widetilde{\alpha},\nabla)$ which is an extension of $(\Gamma,\alpha,\nabla)$.
Using this combinatorial necessarily and sufficient condition, 
we prove that the rank of $\mathcal{A}(\Gamma_{M},\alpha_{M},\nabla_{M})$ for the GKM graph $(\Gamma_{M},\alpha_{M},\nabla_{M})$ induced from $(M^{2m},T^{n})$ gives an upper bound for the dimension of a torus which can act on $M^{2m}$ effectively.
As one of applications of this result, we compute the rank of $\mathcal{A}(\Gamma,\alpha,\nabla)$ of the complex Grassmannian of $2$-planes $G_{2}(\mathbb{C}^{n+2})$ 
with some effective $T^{n+1}$-action, 
and prove that the $T^{n+1}$-action on $G_{2}(\mathbb{C}^{n+2})$ is the maximal effective torus action which preserves the standard complex structure.
\end{abstract}

\maketitle

\section{Introduction}
\label{sect:1}

GKM manifolds (or more general spaces) are -roughly-  
spaces with torus action whose 0- and 1-dimensional orbits have the structure of a graph.  
This class of spaces was first appeared in the work of Goresky-Kottwitz-MacPherson \cite{GKM} as a class of algebraic varieties (GKM stands for their initials).
Motivated by their works, Guillemin-Zara \cite{GuZa} introduce a combinatorial counterpart of the GKM manifold, called a(n) (abstract) GKM graph, 
and give some dictionary between the (symplectic) geometry (and topology) of GKM manifolds and 
the combinatorics of GKM graphs.
This leads us to the study of geometric and topological properties of GKM manifolds using combinatorial properties of GKM graphs (see e.g. \cite{FIM, GW, GHZ, GSZ, Ku15, MMP, Ta, Ty} etc).
In this paper, we introduce a new invariant of GKM graphs and provide a partial answer to the extension problem of torus actions on GKM manifolds. 

To state our main results more precisely, we briefly recall the setting of this paper and background of the extension problem of torus actions.
Let $T^{n}$ be the $n$-dimensional torus and $M^{2m}$ be a $2m$-dimensional, compact, connected, almost complex manifold with effective $T^{n}$-action, which preserves the almost complex structure.
We denote such manifold as $(M^{2m}, T^{n})$, or $M^{2m}$, $M$, $(M,T)$ (if its torus action or dimensions of a manifold and a torus are obviously known from the context).
We call $(M^{2m}, T^{n})$ a {\it GKM manifold} if it satisfies the following properties (see Section~\ref{sect:4} for details):
\begin{enumerate}
\item the set of fixed points is not empty and isolated, i.e., $M^{T}$ is $0$-dimensional;
\item the closure of each connected component of $1$-dimensional orbits is equivariantly diffeomorphic to the $2$-dimensional sphere, called an {\it invariant $2$-sphere}. 
\end{enumerate}
Regarding fixed points as vertices and invariant $2$-spheres as edges, 
this condition is equivalent to that the one-skeleton of $(M^{2m}, T^{n})$ has the structure of a graph, where 
a {\it one-skeleton} of $(M^{2m}, T^{n})$ is the orbit space of the set of 0- and 1-dimensional orbits.  
Note that there are several definitions of GKM manifolds (see e.g. \cite{GHZ, GuZa} etc).
This is because the spaces with such torus actions
have been studied from several different points of view (homotopically, topologically, algebraically or geometrically).
In this paper, we study the GKM manifolds defined by Guillemin-Zara in their original paper \cite{GuZa}.
For example, 
in our setting, the following manifolds are GKM manifolds: non-singular complete toric varieties (also called toric manifolds) and 
homogeneous manifolds $G/H$ (where $G$ is a compact connected Lie group and $H$ is its closed subgroup with the same maximal torus) with torus invariant almost complex structures such as $S^6=G_{2}/SU(3)$, flag manifolds and complex Grassmannians, etc.

Because GKM manifolds are even-dimensional and their effective torus actions have isolated fixed points, 
the differentiable slice theorem tells us that the following inequality holds for every GKM manifold $(M,T)$:
\begin{align*}
\dim T\le \frac{1}{2}\dim M.
\end{align*}
If the equality $\dim T=\frac{1}{2}\dim M$ holds, such a GKM manifold is also known as a {\it torus manifold} (with invariant almost complex structure); famous examples are toric manifolds. 
Namely, by definition, the torus action on a torus manifold is maximal, i.e., the torus action can not be extended to a bigger torus action.
In this case, 
the author, Masuda and Wiemeler \cite{Ku10, Ku11, KuMa, Wie} study the extended $G$-actions of $T$-actions on torus manifolds, 
where $G$ is a non-abelian, compact, simply connected Lie group with the maximal torus $T$. 
On the other hand, for general GKM manifolds, the given torus action might not be maximal.
In fact, a restricted $T^{m-1}$-action of a $2m$-dimensional toric manifold $(M^{2m},T^{m})$ is often a GKM manifold $(M^{2m},T^{m-1})$,
in other words, this GKM manifold $(M^{2m},T^{m-1})$ extends to a toric manifold $(M^{2m},T^{m})$.
Thus the following problem naturally arises in the GKM manifolds (see also Proposition~\ref{ext-cat-in-GKM}):
\begin{problem}
\label{prob}
When does a GKM manifold $(M^{2m},T^{n})$ extend to a GKM manifold $(M^{2m},T^{\ell})$?
Here, $T^{n}\subset T^{\ell}$ and $n<\ell\le m$.
\end{problem}

To give an answer to this problem, 
we introduce a free abelian group with finite rank $\mathcal{A}(\Gamma,\alpha,\nabla)$, 
called a {\it group of axial functions}, for the GKM graph $(\Gamma,\alpha,\nabla)$ in Section~\ref{sect:2}.
Here, a {\it GKM graph} is -roughly- the following triple (see Section~\ref{sect:2} for details): 
an $m$-valent graph $\Gamma$; 
a function $\alpha:E(\Gamma)\to H^{2}(BT^n)\simeq \mathbb{Z}^{n}$, called an {\it axial function}; 
and a collection $\nabla$ of some bijective maps between out-going edges on adjacent vertices, called a {\it connection}.
We call such a GKM graph an {\it $(m,n)$-type} GKM graph in this paper.
The main theorem of this paper can be stated as follows (see Sections~\ref{sect:2} and \ref{sect:3} for details):
\begin{theorem}
\label{main:1}
Let $(\Gamma,\alpha,\nabla)$ be an abstract $(m,n)$-type GKM graph. 
Then, the following two statements are equivalent:
\begin{enumerate}
\item ${\rm rk}\ \mathcal{A}(\Gamma,\alpha,\nabla)\ge \ell$ for some $n\le \ell \le m$;
\item there is an $(m,\ell)$-type GKM graph $(\Gamma,\widetilde{\alpha},\nabla)$ which is an extension of $(\Gamma,\alpha,\nabla)$.
\end{enumerate}
\end{theorem}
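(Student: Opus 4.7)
The plan is to establish a direct dictionary between extensions of $(\Gamma,\alpha,\nabla)$ and linearly independent families in $\mathcal{A}(\Gamma,\alpha,\nabla)$: the coordinates of an extended axial function are elements of $\mathcal{A}$, and conversely independent elements of $\mathcal{A}$ stack into the coordinates of an extended axial function. Both directions hinge on the fact that the connection $\nabla$ is the same before and after extension, so the congruence identity defining an axial function translates coordinate-by-coordinate.

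For (2) $\Rightarrow$ (1), suppose $(\Gamma,\widetilde{\alpha},\nabla)$ is an $(m,\ell)$-type extension. Fix a splitting $\Z^{\ell}\cong\Z^{n}\oplus\Z^{\ell-n}$ dual to the inclusion $T^{n}\hookrightarrow T^{\ell}$, and write $\widetilde{\alpha}(e)=(\widetilde{\alpha}^{1}(e),\ldots,\widetilde{\alpha}^{\ell}(e))$. For every pair of edges $e,e'$ with $\nabla_{e}(e')$ defined, the identity $\widetilde{\alpha}(\nabla_{e}(e'))-\widetilde{\alpha}(e')=c(e,e')\,\widetilde{\alpha}(e)$ holds in $\Z^{\ell}$; projecting to $\Z^{n}$ shows that $c(e,e')$ coincides with the integer already determined by $\alpha$. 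Consequently every component $\widetilde{\alpha}^{i}$ satisfies precisely the defining relations of $\mathcal{A}(\Gamma,\alpha,\nabla)$, and the $\ell$ components are $\Z$-linearly independent because $\widetilde{\alpha}$ has full rank $\ell$. Hence ${\rm rk}\,\mathcal{A}\ge\ell$.

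For (1) $\Rightarrow$ (2), the $n$ coordinates $\alpha^{1},\ldots,\alpha^{n}$ of $\alpha$ already sit in $\mathcal{A}$ and are $\Z$-linearly independent (they span $\Z^{n}$ by the definition of an $(m,n)$-type GKM graph). Using ${\rm rk}\,\mathcal{A}\ge\ell$, enlarge this family to $\{\alpha^{1},\ldots,\alpha^{n},\beta^{n+1},\ldots,\beta^{\ell}\}\subset\mathcal{A}$, still linearly independent. Stack these functions as coordinates of $\widetilde{\alpha}:E(\Gamma)\to\Z^{\ell}$, so that the projection onto the first $n$ coordinates recovers $\alpha$. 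By construction each coordinate of $\widetilde{\alpha}$ obeys the congruence identity with the same integers $c(e,e')$ as $\alpha$, and therefore so does $\widetilde{\alpha}$ itself. It remains to check that $\widetilde{\alpha}$ qualifies as an axial function of an $(m,\ell)$-type GKM graph.

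The main obstacle lies exactly in this last verification: one must show that the stacked $\widetilde{\alpha}$ meets the non-degeneracy part of the axial-function axioms, namely that at each vertex $v$ the values $\widetilde{\alpha}(e_{1}),\ldots,\widetilde{\alpha}(e_{m})$ on the out-going edges are pairwise non-parallel and together generate a sublattice of rank $\ell$. The definition of $\mathcal{A}(\Gamma,\alpha,\nabla)$ in Section~\ref{sect:2} must therefore be calibrated so that these conditions persist under stacking: membership in $\mathcal{A}$ should enforce the correct congruence behaviour for each coordinate, while the rank-$\ell$ hypothesis together with the bound $\ell\le m$ guarantees enough independence between coordinates at each vertex. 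Unpacking this calibration, and book-keeping the dual embedding $T^{n}\hookrightarrow T^{\ell}$ so that the new torus really contains the old one in the abstract sense of extensions, is the essential combinatorial content of the theorem.
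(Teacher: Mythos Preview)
Your overall strategy matches the paper's exactly: coordinates of an axial function lie in $\mathcal{A}(\Gamma,\alpha,\nabla)$, and conversely independent elements of $\mathcal{A}$ stack into an extended axial function. The direction (2)$\Rightarrow$(1) is essentially complete and coincides with the paper's argument (Proposition~\ref{prop:2-4} plus Lemma~\ref{lem:3-2}).

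However, in the direction (1)$\Rightarrow$(2) you correctly locate the obstacle but do not overcome it. Your last two paragraphs are a description of what must be checked rather than a verification, and one ingredient you need is not mentioned at all. Concretely:
\begin{itemize}
\item \emph{Effectiveness at every vertex.} You assert that ``the rank-$\ell$ hypothesis together with the bound $\ell\le m$ guarantees enough independence between coordinates at each vertex.'' This does not follow automatically: rank~$\ell$ is a global condition on $\mathcal{A}$, and you must show it localises. The paper's mechanism is the injectivity of the restriction map $\rho_{p}:\mathcal{A}(\Gamma,\alpha,\nabla)\to\Z E_{p}(\Gamma)\simeq\Z^{m}$, $f\mapsto f(p)$ (proved in Lemma~\ref{lem:3-2} using connectedness of $\Gamma$ and the defining relation~\eqref{def-rel}, which forces $f(q)$ to be determined by $f(p)$ along any edge). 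Injectivity of $\rho_{p}$ implies that $f_{1}(p),\ldots,f_{\ell}(p)$ remain independent in $\Z^{m}$ for \emph{every} $p$, which is exactly what gives condition~(4). You have not supplied this step.
\item \emph{The sign axiom $\widetilde{\alpha}(\overline{e})=-\widetilde{\alpha}(e)$.} This is not automatic from the congruence identity alone; in the paper it comes from Lemma~\ref{lem:2-11}, which extracts $f(p)_{e}=-f(q)_{\overline{e}}$ from the defining relation of $\mathcal{A}$ together with $c_{e}(e)=-2$. You do not address this axiom.
\item \emph{Pairwise independence.} This one is easy once noted: since the first $n$ coordinates of $\widetilde{\alpha}$ recover $\alpha$, and $\alpha(e),\alpha(e')$ are already linearly independent, so are $\widetilde{\alpha}(e),\widetilde{\alpha}(e')$. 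The paper spells this out; you only gesture at it.
\end{itemize}
In short, your plan is correct and aligned with the paper, but the proof is incomplete: the missing key lemma is the injectivity of $\rho_{p}$, without which the local rank-$\ell$ condition cannot be deduced from the global one.
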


Because a GKM manifold $(M^{2m},T^{n})$ defines an $(m,n)$-type GKM graph (see Section~\ref{sect:4}), 
Theorem~\ref{main:1} implies that the maximal dimension of torus which can act on a GKM manifold $M$ is bounded from above by the rank of the  group of axial functions of the GKM graph induced from $M$.
Namely, we obtain the main result of this paper as follows (see Section~\ref{sect:4} for details):

\begin{corollary}
\label{main:2}
Let $(M^{2m},T^{n})$ be a GKM manifold and $(\Gamma_{M},\alpha_{M},\nabla_{M})$ be its $(m,n)$-type GKM graph.
Assume that ${\rm rk}\ \mathcal{A}(\Gamma_{M},\alpha_{M},\nabla_{M})=\ell$. 
Then, the $T^{n}$-action on $M^{2m}$ does not extend to any $T^{\ell+1}$-action preserving the given almost complex structure.

Hence, if ${\rm rk}\ \mathcal{A}(\Gamma_{M},\alpha_{M},\nabla_{M})=n$,  
then the $T^{n}$-action on $M^{2m}$ is maximal among torus actions which preserve the given almost complex structure.
\end{corollary}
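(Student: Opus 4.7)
The plan is to reduce the corollary to the implication (2)$\Rightarrow$(1) of Theorem~\ref{main:1} by translating an extension of the $T^{n}$-action on $M^{2m}$ into an extension of its associated GKM graph. Arguing by contradiction, suppose the $T^{n}$-action extends to an effective $T^{\ell+1}$-action on $M^{2m}$ with $T^{n}\subset T^{\ell+1}$. I would first verify that $(M^{2m},T^{\ell+1})$ is again a GKM manifold whose one-skeleton coincides with that of $(M^{2m},T^{n})$. The inclusion $M^{T^{\ell+1}}\subset M^{T^{n}}$ is automatic; for the reverse, if $p\in M^{T^{n}}$ and $s\in T^{\ell+1}$, then for any $t\in T^{n}$ one has $t\cdot(s\cdot p)=s\cdot(t\cdot p)=s\cdot p$ since $T^{\ell+1}$ is abelian, so the entire orbit $T^{\ell+1}\cdot p$ lies in $M^{T^{n}}$; as $M^{T^{n}}$ is isolated, this forces $p\in M^{T^{\ell+1}}$. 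An analogous argument shows that the $T^{n}$-invariant $2$-spheres are preserved setwise by $T^{\ell+1}$ and are precisely its invariant $2$-spheres.

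Next, the enlarged action determines a new axial function $\widetilde{\alpha}_{M}\colon E(\Gamma_{M})\to H^{2}(BT^{\ell+1})$ from the weights of the $T^{\ell+1}$-isotropy representations on the invariant $2$-spheres, while the connection $\nabla_{M}$ is unchanged because it is read off from the tangent-space decomposition at a fixed point and this decomposition is common to $T^{n}$ and $T^{\ell+1}$. The projection $H^{2}(BT^{\ell+1})\to H^{2}(BT^{n})$ induced by $T^{n}\hookrightarrow T^{\ell+1}$ sends $\widetilde{\alpha}_{M}$ to $\alpha_{M}$, so $(\Gamma_{M},\widetilde{\alpha}_{M},\nabla_{M})$ is an $(m,\ell+1)$-type GKM graph extending $(\Gamma_{M},\alpha_{M},\nabla_{M})$. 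Applying (2)$\Rightarrow$(1) of Theorem~\ref{main:1} then gives ${\rm rk}\ \mathcal{A}(\Gamma_{M},\alpha_{M},\nabla_{M})\ge \ell+1$, contradicting the hypothesis that this rank equals $\ell$. The final clause of the corollary is immediate by taking $\ell=n$: if the rank is $n$, then the $T^{n}$-action cannot extend to any $T^{n+1}$-action, so it is the maximal effective torus action.

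The main obstacle is the opening step, namely the verification that an extension of torus actions on the manifold produces a legitimate extension of GKM graphs, i.e., that the underlying graph, the connection, and the compatibility of axial functions under the projection $H^{2}(BT^{\ell+1})\to H^{2}(BT^{n})$ all persist under enlargement of the torus. This is precisely the combinatorial-geometric bridge encoded in the correspondence referenced as Proposition~\ref{ext-cat-in-GKM}, and is what allows the purely combinatorial Theorem~\ref{main:1} to be brought to bear on the manifold problem.
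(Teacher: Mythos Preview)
Your argument is correct and follows essentially the same route as the paper: the paper deduces Corollary~\ref{main:2} directly from Theorem~\ref{thm-main} together with Proposition~\ref{ext-cat-in-GKM}, and your proposal amounts to sketching the content of Proposition~\ref{ext-cat-in-GKM} (coincidence of fixed points, invariant $2$-spheres, connection, and compatibility of axial functions under the projection) before invoking the theorem. One small refinement: the invariance of the connection is argued in the paper via the $T^{\ell+1}$-equivariant splitting of the normal bundle of each invariant $S^{2}$ into line bundles (which uses that the extended action preserves the almost complex structure), rather than only the tangent-space decomposition at a single fixed point as you phrase it.
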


\begin{remark}
Shunji Takuma also obtains a partial answer to Problem~\ref{prob} by introducing an obstruction class for the extension of an $(m,n)$-type GKM graph to an $(m,n+1)$-type GKM graph in his note \cite{Ta}.
Theorem~\ref{main:1} may be regarded as the generalization of his result.
\end{remark}

Problem~\ref{prob} is reminiscent of the computation of the {\it torus degree of symmetry} of a manifold $X$ (see \cite{Hs}), i.e., 
the maximal dimension of a torus which can act on $X$ effectively.
A torus degree of symmetry has been studied for many classes of manifolds, in particular from differential geometry (see e.g. \cite{ES, Hs, Wa, Wil}).
Corollary~\ref{main:2} may be regarded as to give an upper bound of the {\it torus degree of symmetry of an invariant almost complex structure} of 
a GKM manifold.
As an application of Corollary~\ref{main:2}, 
in the final section (Section~\ref{sect:5}), 
we compute the torus degree of such symmetry for the complex Grassmannian of $2$-planes, denoted as
\begin{align*}
G_{2}(\mathbb{C}^{n+2})\simeq GL(n+2,\mathbb{C})/GL(2,\mathbb{C})\times GL(n,\mathbb{C})\simeq U(n+2)/U(2)\times U(n).
\end{align*}
Namely, we compute ${\rm rk}\ \mathcal{A}(\Gamma_{M},\alpha_{M},\nabla_{M})$ for $M=G_{2}(\mathbb{C}^{n+2})$ with some effective $T^{n+1}$-action and prove the following fact:
\begin{proposition} 
\label{main:3}
The standard effective $T^{n+1}$-action on $G_{2}(\mathbb{C}^{n+2})$ is maximal among the effective torus actions which preserve the almost complex structure.
\end{proposition}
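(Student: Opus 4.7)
The plan is to apply Corollary \ref{main:2} to $M = G_2(\C^{n+2})$ equipped with its standard $T^{n+1}$-action, which is a GKM manifold of real dimension $4n$ (so $m=2n$ in the notation of the paper). By Theorem \ref{main:1}, $\mathrm{rk}\,\mathcal{A}(\Gamma_M,\alpha_M,\nabla_M) \ge n+1$ is automatic, since the given axial function itself witnesses a $(2n, n+1)$-type extension of $(\Gamma_M,\alpha_M,\nabla_M)$. Hence the entire task reduces to proving the matching upper bound $\mathrm{rk}\,\mathcal{A}(\Gamma_M,\alpha_M,\nabla_M) \le n+1$; Corollary \ref{main:2} will then rule out any effective torus extension of larger dimension, which is exactly the proposition.

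The first step is to write down the GKM graph $(\Gamma_M,\alpha_M,\nabla_M)$ explicitly. Its vertices are the $\binom{n+2}{2}$ two-element subsets $\{i,j\} \subset \{1,\dots,n+2\}$; each vertex $\{i,j\}$ has $2n$ outgoing edges, one going to $\{i,k\}$ with axial value $t_k - t_j$ and one going to $\{j,k\}$ with axial value $t_k - t_i$ for each $k \notin \{i,j\}$, where $t_1,\dots,t_{n+2}$ are the standard characters of $T^{n+2}$ subject to the relation $t_1+\cdots+t_{n+2}=0$ that makes the induced $T^{n+1}$-action effective. The connection $\nabla$ is the canonical one that, at each vertex $\{i,j\}$, matches edges according to the index-exchange symmetry of the subset.

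The second step is to translate the definition of $\mathcal{A}(\Gamma_M,\alpha_M,\nabla_M)$ from Section \ref{sect:2} into an explicit linear-algebraic problem: a candidate $\widetilde\alpha$ assigns to each oriented edge $e\colon\{i,j\}\to\{i,k\}$ a lattice element $X_{i;j,k}$ such that $X_{i;j,k}$ projects to $\alpha(e)$ and satisfies the congruence $\widetilde\alpha(\nabla_e e') \equiv \widetilde\alpha(e') \pmod{\widetilde\alpha(e)}$ at every vertex. I would then exploit, for each $3$-element subset $\{i,j,k\}$, the triangle in $\Gamma_M$ on vertices $\{i,j\},\{j,k\},\{i,k\}$, and, for each $4$-element subset, the short cycles it generates, to propagate these congruences around the graph and force $X_{i;j,k}$ to be independent of $i$ and of the form $\tilde t_k - \tilde t_j$ for globally defined symbols $\tilde t_1,\dots,\tilde t_{n+2}$.

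The main obstacle is precisely this combinatorial propagation: one has to verify that, after imposing every local constraint coming from $\nabla$, the only surviving freedom is the global translation $\tilde t_i \mapsto \tilde t_i + c$, so that exactly $n+1$ independent parameters remain. Once that is complete, $\mathrm{rk}\,\mathcal{A}(\Gamma_M,\alpha_M,\nabla_M) = n+1$, and Corollary \ref{main:2} immediately implies that the $T^{n+1}$-action is maximal, proving the proposition.
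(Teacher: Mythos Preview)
Your plan is correct and follows essentially the same route as the paper: the paper also writes down the GKM graph of $(G_2(\C^{n+2}),T^{n+1})$ explicitly (Section~\ref{sect:5.1}) and then proves ${\rm rk}\,\mathcal{A}(\Gamma_n)\le n+1$ by restricting to a single base vertex $\{n+1,n+2\}$ and using the triangle subgraphs of the Johnson graph to derive $n-1$ linear relations among the $2n$ coordinates there (Lemma~\ref{finallem}), after which Corollary~\ref{main:2} gives the maximality. Your more symmetric reformulation via global parameters $\tilde t_1,\dots,\tilde t_{n+2}$ is equivalent, and the explicit computation in Lemma~\ref{finallem} is exactly the ``combinatorial propagation'' you flag as the main obstacle --- note that only the edge relations along triangles are needed, so the $4$-element cycles you mention are unnecessary.
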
 
Note that there is the natural $T^{n+2}$-action on $G_{2}(\mathbb{C}^{n+2})$ which is induced from the maximal torus subgroup in $U(n+2)$. 
However, this action is not effective.

The organization of this paper is as follows.
In Section~\ref{sect:2}, we recall an abstract GKM graph $(\Gamma,\alpha,\nabla)$, and introduce its group of axial functions $\mathcal{A}(\Gamma,\alpha,\nabla)$. 
In Section~\ref{sect:3}, the main theorem (Theorem~\ref{main:1}) is proved.
In Section~\ref{sect:4}, in order to apply our results to geometry, we recall the definition of a GKM graph induced from a GKM manifold, and show Corollary~\ref{main:2}.
We also prove the $T^{2}$-action on $S^{6}=G_{2}/SU(3)$ is maximal  in this section.
In Section~\ref{sect:5}, we obtain the GKM graph obtained from the effective $T^{n+1}$-action on $G_{2}(\mathbb{C}^{n+2})$, and 
compute its group of axial functions. 
This proves Proposition~\ref{main:3}.

\section{GKM graph and its group of axial functions}
\label{sect:2}

In this section, we first recall the basic facts about GKM graphs $(\Gamma,\alpha,\nabla)$ (see~\cite{GuZa}) 
and introduce the extension of axial functions of GKM graphs precisely.
Then, a finite rank free abelian group $\mathcal{A}(\Gamma,\alpha,\nabla)$, called a {\it group of axial functions}, is defined.

\subsection{GKM graph and its extension}
\label{sect:2.1}

We first prepare some notation to define a GKM graph.
Let $\Gamma=(V(\Gamma),E(\Gamma))$ be an {\it abstract graph} comprising a set $V(\Gamma)$ of vertices and a set  
$E(\Gamma)$ of oriented edges.
For the given orientation on $e\in E(\Gamma)$, we denote 
its initial vertex by $i(e)$ and its terminal vertex by $t(e)$.
In this paper, we assume that there are no loops in $E(\Gamma)$, i.e., $i(e)\not=t(e)$ for any $e\in E(\Gamma)$, and 
$\Gamma$ is connected.
The symbol $\overline{e}\in E(\Gamma)$ represents the edge $e$ with its orientation reversed, i.e., 
$i(e)=t(\overline{e})$ and $t(e)=i(\overline{e})$.
The subset $E_{p}(\Gamma)\subset E(\Gamma)$ is the set of out-going edges from $p\in V(\Gamma)$; more precisely,  
\[
E_{p}(\Gamma)=\{e\in E(\Gamma) \mid i(e)=p\}.
\]
A finite connected graph $\Gamma$ is called an {\it $m$-valent graph} if $|E_{p}(\Gamma)|=m$ for all $p\in V(\Gamma)$,
where the symbol $|X|$ represents the cardinality of a finite set $X$.

Let $\Gamma$ be an $m$-valent graph.
We next define a label $\alpha:E(\Gamma)\to H^{2}(BT)$ on $\Gamma$.
Recall that $BT^{n}$ (often denoted by $BT$) is a classifying space of an $n$-dimensional torus $T$,
and its cohomology ring (over $\mathbb{Z}$-coefficient) is isomorphic to the polynomial ring
\[
H^{*}(BT)\simeq \Z[a_{1},\ldots,a_{n}], 
\] 
where $a_{i}$ is a variable with $\deg a_{i}=2$ for $i=1,\ldots,n$.
So its degree $2$ part $H^{2}(BT)$ is isomorphic to $\mathbb{Z}^{n}$.
Put a label by a function $\alpha:E(\Gamma)\to H^{2}(BT)$ on edges of $\Gamma$.
Set
\[
\alpha_{(p)}=\{\alpha(e) \mid e\in E_{p}(\Gamma)\}\subset H^{2}(BT).
\]
An {\it axial function} on $\Gamma$ is the function $\alpha:E(\Gamma)\to H^{2}(BT^{n})$ for $n\le m$ 
which satisfies the following three conditions:
\begin{description}
\item[(1)] $\alpha(e)=-\alpha(\overline{e})$;
\item[(2)] for each $p\in V(\Gamma)$, the set $\alpha_{(p)}$ is {\it pairwise linearly independent}, i.e., 
each pair of elements in $\alpha_{(p)}$ is linearly independent in $H^{2}(BT)$;
\item[(3)] for all $e\in E(\Gamma)$, there exists a bijective map $\nabla_{e}:E_{i(e)}(\Gamma)\to E_{t(e)}(\Gamma)$ such that 
\begin{enumerate}
\item $\nabla_{\overline{e}}=\nabla_{e}^{-1}$, 
\item $\nabla_{e}(e)=\overline{e}$, and 
\item for each $e'\in E_{i(e)}(\Gamma)$,
there exists an integer $c_{e}(e')$ such that 
\begin{align}
\label{cong-rel}
\alpha(\nabla_{e}(e'))-\alpha(e')=c_{e}(e') \alpha(e)\in H^{2}(BT).
\end{align}
\end{enumerate} 
\end{description}
The collection $\nabla=\{\nabla_{e}\ |\ e\in E(\Gamma)\}$ is called a {\it connection} on the labelled graph $(\Gamma,\alpha)$;
we denote the labelled graph with connection as $(\Gamma,\alpha,\nabla)$,
and the equation \eqref{cong-rel} is called a {\it congruence relation}. 
We call the integer $c_{e}(e')$ in the congruence relation a {\it congruence coefficient of $e'$ on $e$}.
The conditions as above are called an {\it axiom of axial function}.
In addtion, in this paper, we also assume the followings:
\begin{description}
\item[(4)] for each $p\in V(\Gamma)$, the set $\alpha_{(p)}$ spans $H^{2}(BT)$.
\end{description}
The axial function which satisfies (4) is called an {\it effective} axial function.
\begin{definition}[GKM graph \cite{GuZa}]
\label{def:2-1}
If an $m$-valent graph $\Gamma$ is labeled by an axial function $\alpha:E(\Gamma)\to H^{2}(BT^{n})$ for some $n\le m$,
then such labeled graph is said to be an (abstract) {\it GKM graph}, and denoted as 
$(\Gamma,\alpha,\nabla)$ (or $(\Gamma,\alpha)$ if the connection $\nabla$ is obviously determined).
\end{definition}

\begin{definition}[$(m,n)$-type GKM graph]
\label{def:2-2}
Let $(\Gamma,\alpha, \nabla)$ be an abstract GKM graph.
If the axial function $\alpha$ is effective, $(\Gamma,\alpha, \nabla)$ is said to be an {\it $(m,n)$-type GKM graph}.
\end{definition}
In this paper, we only consider $(m,n)$-type GKM graphs ($n\le m$) unless otherwise stated.

Figure \ref{examples} shows examples of GKM graphs.
\begin{figure}[h]
\centering
\includegraphics[width=350pt,clip]{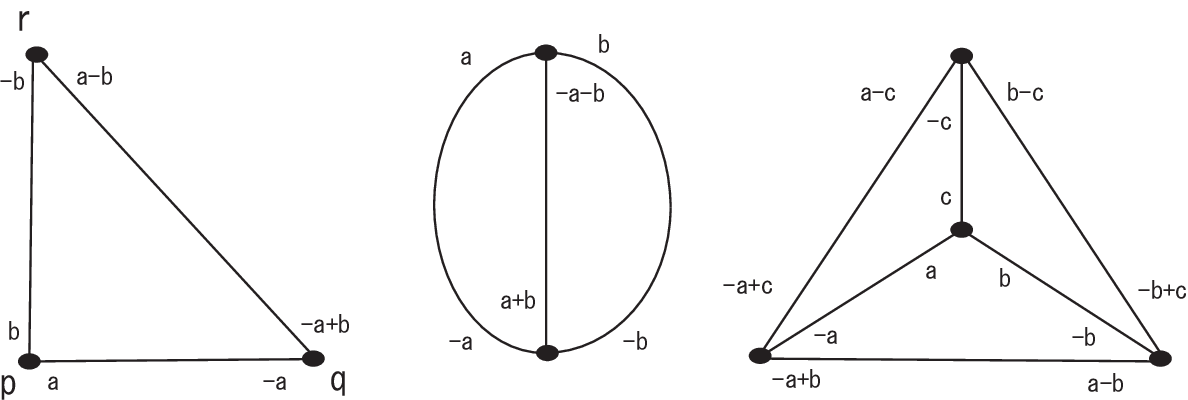}
\caption{Examples of (2,2)-type, (3,2)-type and (3,3)-type GKM graphs from left, where 
$a,b$ (resp.~$c$) are generators of $H^{*}(BT^{2})$ (resp.~$H^{*}(BT^{3})$). 
For example, in the left $(2,2)$-GKM graph, the axial function is defined by  $\alpha(pq)=a$, $\alpha(qr)=-a+b$, etc. In the (3,3)-type GKM graph, we omit the axial functions of the opposite directions of edges because it is automatically determined from the axiom (1) of GKM graph.}
\label{examples}
\end{figure}

We note the following lemma proved in~\cite[Proposition 2.1.3]{GuZa}.
\begin{lemma}
\label{lem:2-3}
Let $(\Gamma,\alpha,\nabla)$ be a GKM graph.
If $\alpha_{(p)}$ is three-independent for every $p\in V(\Gamma)$, 
the connection $\nabla$ is uniquely determined. 
\end{lemma}
Here, in Lemma~\ref{lem:2-3}, the set of vectors $\alpha_{(p)}$ in $H^{2}(BT)$ is called a {\it three-independent} if  
every triple $\{\alpha(e_{i}),\alpha(e_{j}),\alpha(e_{k})\}\subset \alpha_{(p)}$ is linearly independent 
(e.g., the right (3,3)-type GKM graph in Figure~\ref{examples}).
So, in this case, we may denote $(\Gamma,\alpha,\nabla)$ as $(\Gamma,\alpha)$ without connection $\nabla$.

We also note the following lemma:
\begin{lemma}
\label{lem:2-4}
For all $e\in E(\Gamma)$, $c_{e}(e)=-2$.
\end{lemma}
\begin{proof}
By the axiom (1), (3)-(2) and (3)-(3) of axial function, it is straightforward.
\end{proof}

We close this section by defining an extension.
Let $(\Gamma,\alpha,\nabla)$ be an $(m,n)$-type GKM graph.
An $(m,\ell)$-type GKM graph $(\widetilde{\Gamma},\widetilde{\alpha},\widetilde{\nabla})$ (for $n<\ell\le m$) is said to be an {\it extension} of $(\Gamma,\alpha,\nabla)$ if $\widetilde{\Gamma}=\Gamma$, $\nabla=\widetilde{\nabla}$ and there exists a 
projection $\pi:H^{2}(BT^{\ell})\to H^{2}(BT^{n})$ such that the following diagram commutes:
\begin{align*}
\xymatrix{
 & H^{2}(BT^{\ell})\ar[d]^{\pi}    \\
E(\Gamma) \ar[ru]^{\widetilde{\alpha}} \ar[r]^{\alpha} & H^{2}(BT^{n})  
}
\end{align*} 
Figure~\ref{eg-extend} shows an example of extensions.
\begin{figure}[h]
\centering
\includegraphics[width=300pt,clip]{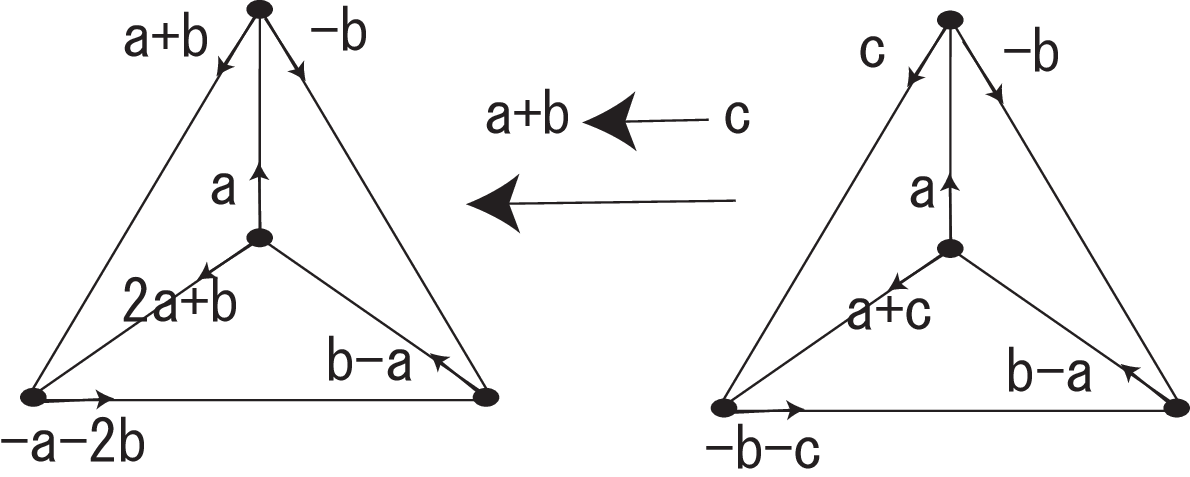}
\caption{The left (3,2)-type GKM graph extends to the right (3,3)-type GKM graph.
In this figure, we omit the axial function on the direction $\overline{e}$, because of the relation $\alpha(\overline{e})=-\alpha(e)$.}
\label{eg-extend}
\end{figure}

\subsection{The invariant function}
\label{sect:2.2}

Let $(\Gamma,\alpha,\nabla)$ be an $(m,n)$-type GKM graph for $n\le m$.
We shall define an invariant function $c_{(\Gamma,\alpha,\nabla)}:E(\Gamma)\to \mathbb{Z}^{m}$ under extensions.
To define it, we first fix an order of out-going edges on each vertex $p$, i.e., set 
\[
E_{p}(\Gamma)=\{e_{1,p},\ldots,e_{m,p}\}.
\]
Then, we can define the free $\mathbb{Z}$-module with rank $m$ on each $p$, say $\mathbb{Z}E_{p}(\Gamma)$, by regarding $\{e_{1,p},\ldots,e_{m,p}\}$ as the formal generator of $\mathbb{Z}E_{p}(\Gamma)$.
Namely, 
\[
\mathbb{Z}E_{p}(\Gamma):=\mathbb{Z}e_{1,p}\oplus \cdots\oplus \mathbb{Z}e_{m,p}\simeq \mathbb{Z}^{m}.
\]
Because an order on each $E_{p}(\Gamma)$ is fixed, 
the connection $\nabla_{e}:E_{i(e)}(\Gamma)\to E_{t(e)}(\Gamma)$ induces the permutation on $\{1,\ldots,m\}$.
So, there is a permutation $\sigma:\{1,\ldots,m\}\to \{1,\ldots,m\}$ such that 
\begin{align*}
\nabla_{e}(e_{j,i(e)})=e_{\sigma(j),t(e)}.
\end{align*}
Then, the connection $\nabla_{e}$ defines the isomorphism 
\begin{align}
\label{N_e}
N_{e}:\mathbb{Z}E_{i(e)}(\Gamma)\to \mathbb{Z}E_{t(e)}(\Gamma)\in GL(m;\mathbb{Z})
\end{align}
by the inverse (or equivalently the transpose) of the permutation $(m\times m)$-square matrix. More precisely, the square matrix $N_{e}$ is defined as follows.
If we put $\mathbb{Z}E_{i(e)}(\Gamma)=\mathbb{Z}e_{1,p}\oplus\cdots\oplus\mathbb{Z}e_{m,p}$ ($p=i(e)$) and 
$\mathbb{Z}E_{t(e)}(\Gamma)=\mathbb{Z}e_{1,q}\oplus\cdots\oplus\mathbb{Z} e_{m,q}$ ($q=t(e)$), then $\nabla_{e}$ induces the following isomorphism:
\begin{align*}
& k_{1}e_{1,p}\oplus\cdots\oplus k_{m}e_{m,p} \\
& \mapsto k_{1}e_{\sigma(1),q}\oplus\cdots\oplus k_{m}e_{\sigma(m),q}=k_{\sigma^{-1}(1)}e_{1,q}\oplus\cdots\oplus k_{\sigma^{-1}(m)}e_{m,q}.
\end{align*}
Thus $N_{e}:\mathbb{Z}E_{i(e)}(\Gamma)\to \mathbb{Z}E_{t(e)}(\Gamma)$ is defined by the square matrix which induces the following isomorphism
\begin{align}
\label{def_N_e}
N_{e}:
\begin{pmatrix}
k_{1} \\
\vdots \\
k_{m}
\end{pmatrix}
\mapsto 
\begin{pmatrix}
k_{\sigma^{-1}(1)} \\
\vdots \\
k_{\sigma^{-1}(m)}
\end{pmatrix}
\end{align}
where $\sigma$ is the permutation induced from $\nabla_{e}$.
Take an edge $e\in E(\Gamma)$.
Recall that the congruence coefficient $c_{e}(e')$ which is defined by~\eqref{cong-rel} is an integer attached on 
every edge $e'\in E_{i(e)}(\Gamma)$ for the fixed edge $e\in E(\Gamma)$.
Therefore, the $m$-tuple of congruence coefficients on $e$ defines the element in $\mathbb{Z}E_{i(e)}(\Gamma)$ by
\[
c_{e}=(c_{e}(e_{1,i(e)}),\ldots,c_{e}(e_{m,i(e)}))\in \mathbb{Z}e_{1,i(e)}\oplus \cdots\oplus \mathbb{Z}e_{m,i(e)}.
\]
Thus we may define the function 
\begin{align*}
c_{(\Gamma,\alpha,\nabla)}:E(\Gamma)\to \mathbb{Z}^{m}\quad \text{by}\quad c_{(\Gamma,\alpha,\nabla)}(e)=c_{e}.
\end{align*}

Because of the following proposition (see also~\cite{Ta}), we call this function $c_{(\Gamma,\alpha,\nabla)}$ an {\it invariant function} of 
extensions of $(\Gamma,\alpha,\nabla)$:

\begin{proposition}
\label{prop:2-5}
For any extensions $(\Gamma,\widetilde{\alpha},\nabla)$ of $(\Gamma,\alpha,\nabla)$,
the equation 
$c_{(\Gamma,\alpha,\nabla)}=c_{(\Gamma,\widetilde{\alpha},\nabla)}$ holds.
\end{proposition}
\begin{proof}
Let $(\Gamma,\widetilde{\alpha},\nabla)$ be an $(m,\ell)$-type GKM graph for some $\ell>n$, and 
$\widetilde{c}_{e}(e')$ be its congruence coefficient of $e'$ on $e$.
Fix an order of out-going edges on each vertex $p$.
By definition of the function $c_{(\Gamma,\alpha,\nabla)}$,
it is enough to prove that the equation
$c_{e}(e')=\widetilde{c}_{e}(e')$ for all $e\in E(\Gamma)$ and $e'\in E_{i(e)}(\Gamma)$.

By definition, there is a projection $\pi:H^{2}(BT^{\ell})\to H^{2}(BT^{n})$ 
such that $\pi\circ \widetilde{\alpha}=\alpha$.
Together with the congruence relations~\eqref{cong-rel}, we have 
\begin{align*}
\pi(\widetilde{\alpha}(\nabla_{e}(e')))=\alpha(\nabla_{e}(e'))=\alpha(e')+c_{e}(e')\alpha(e)
\end{align*}
and 
\begin{align*}
\pi(\widetilde{\alpha}(\nabla_{e}(e')))&=\pi(\widetilde{\alpha}(e')+\widetilde{c}_{e}(e')\widetilde{\alpha}(e)) \\
&=\pi(\widetilde{\alpha}(e'))+\widetilde{c}_{e}(e')\pi(\widetilde{\alpha}(e)) \\
&=\alpha(e')+\widetilde{c}_{e}(e')\alpha(e).
\end{align*}
Comparing these equations, we establish the statement.
\end{proof}

The following lemma tells us that the $c_{(\Gamma,\alpha,\nabla)}(\overline{e})$ is automatically determined by $c_{(\Gamma,\alpha,\nabla)}(e)$ and $N_{e}$ defined in~\eqref{N_e}.
\begin{lemma}
\label{lem:2-6}
For any $e\in E(\Gamma)$, the equation 
$N_{e}(c_{(\Gamma,\alpha,\nabla)}(e))=c_{(\Gamma,\alpha,\nabla)}(\overline{e})$
holds.
\end{lemma}
\begin{proof}
Let $\sigma$ be the permutation on $\{1,\ldots,m\}$ induced from $\nabla_{e}$.
Then, $\nabla_{e}(e_{j,i(e)})=e_{\sigma(j),i(\overline{e})}$ (and $\nabla_{\overline{e}}(e_{\sigma(j),i(\overline{e})})=e_{j,i(e)}$).
Therefore, 
by definitions of $N_{e}$ and $c_{(\Gamma,\alpha,\nabla)}$, it is enough to show the following equality: $c_{e}(e_{\sigma^{-1}(j),i(e)})=c_{\overline{e}}(e_{j,i(\overline{e})})$, i.e., 
\begin{align*}
& c_{e}(e_{j,i(e)})=c_{\overline{e}}(e_{\sigma(j),i(\overline{e})})
\end{align*}
for all $j=1,\ldots,m$.
Because of the congruence relations~\eqref{cong-rel} on $e$ and $\overline{e}$, we have 
\begin{align*}
\alpha(\nabla_{e}(e_{j,i(e)}))-\alpha(e_{j,i(e)})
& =c_{e}(e_{j,i(e)})\alpha(e) \\
& =\alpha(e_{\sigma(j),i(\overline{e})})-\alpha(e_{j,i(e)})
\end{align*}
and 
\begin{align*}
\alpha(\nabla_{\overline{e}}(e_{\sigma(j),i(\overline{e})}))-\alpha(e_{\sigma(j),i(\overline{e})})
&=c_{\overline{e}}(e_{\sigma(j),i(\overline{e})})\alpha(\overline{e}) \\
&=\alpha(e_{j,i(e)})-\alpha(e_{\sigma(j),i(\overline{e})}).
\end{align*}
By these equations and $\alpha(e)=-\alpha(\overline{e})$, we establish the statement.
\end{proof}

\begin{example}
\label{exam:2-7}
Figure~\ref{eg-invfct} shows an example of the invariant function $c_{(\Gamma,\alpha,\nabla)}$ induced from the $(3,2)$-GKM graph $(\Gamma,\alpha)$.
In this case, we put the order of $E_{p}(\Gamma)$ as in Figure~\ref{eg-invfct} by 
\begin{align*}
e_{1,p}=e_{1},\quad e_{2,p}=e_{2},\quad e_{3,p}=e_{3},
\end{align*} 
and the order of $E_{q}(\Gamma)$ similarly by 
\begin{align*}
e_{1,q}=\overline{e_{1}},\quad e_{2,q}=\overline{e_{2}},\quad e_{3,q}=\overline{e_{3}}.
\end{align*}
Then, by using the congruence relation for the axial function defined in Figure~\ref{eg-invfct}, the connection $\nabla_{e_{i}}$ for $i=1,2,3$ is determined uniquely by $e_{i}\mapsto \overline{e}_{i}$ and $e_{j}\mapsto \overline{e}_{k}$, where $\{i,j,k\}=\{1,2,3\}$ in Figure~\ref{eg-invfct}.
For example, for the edge $e_{1}$, 
by definition of $c_{(\Gamma,\alpha,\nabla)}$ and Lemma~\ref{lem:2-4}, we have  
\[
c_{(\Gamma,\alpha,\nabla)}(e_{1})=(-2, 1, 1).
\]
With the similar computation (and using Lemma~\ref{lem:2-6}), we obtain $c_{(\Gamma,\alpha,\nabla)}$ as 
the right graph in Figure~\ref{eg-invfct}.
\begin{figure}[h]
\begin{center}
\includegraphics[width=300pt,clip]{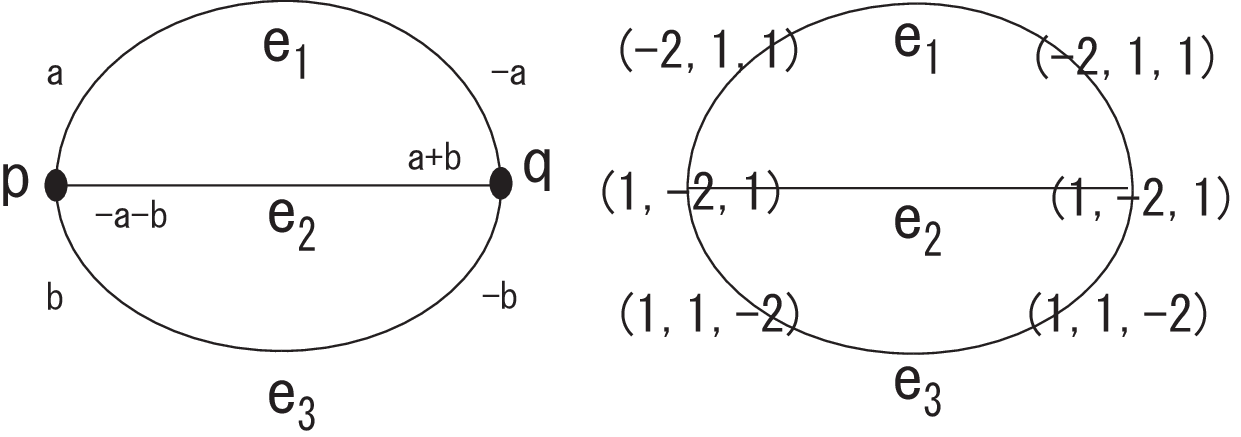}
\caption{The left one is the $(3,2)$-GKM graph $(\Gamma,\alpha)$ in Figure~\ref{examples} and the right one is its invariant function $c_{(\Gamma,\alpha,\nabla)}:E(\Gamma)\to \mathbb{Z}^3$.}
\label{eg-invfct}
\end{center}
\end{figure}
\end{example}

\subsection{A group of axial functions $\mathcal{A}(\Gamma,\alpha,\nabla)$ of $(\Gamma,\alpha,\nabla)$}
\label{sect:2.3}

Let $(\Gamma,\alpha,\nabla)$ be an $(m,n)$-type GKM graph.
In this subsection, we introduce a finitely generated free abelian group $\mathcal{A}(\Gamma,\alpha,\nabla)$, called a {\it group of axial functions}.
Before defining $\mathcal{A}(\Gamma,\alpha,\nabla)$, we prepare some notations.
Let $f\in \mathbb{Z}E_{p}(\Gamma)$.
The symbol $f_{e}(\in \mathbb{Z})$ represents the integer of the coefficient in $f$ corresponding to the edge $e\in E_{p}(\Gamma)$.
For example, if we put the order $E_{p}(\Gamma)=\{e_{1},\ldots,e_{m}\}$ and $f=(x_{1},\ldots,x_{m})\in \mathbb{Z}^{m}\simeq \mathbb{Z}E_{p}(\Gamma)$ with respect to this order,
then we put $f_{e_{j}}=x_{j}$.

\begin{definition}[Group of axial functions]
\label{def-main}
A $\mathbb{Z}$-module $\mathcal{A}(\Gamma,\alpha,\nabla)$ is defined by the submodule of 
\begin{align*}
\{f:V(\Gamma)\to \mathbb{Z}^{m}\}=\bigoplus_{p\in V(\Gamma)}\mathbb{Z}E_{p}(\Gamma)\simeq \bigoplus_{p\in V(\Gamma)}\mathbb{Z}^{m}
\end{align*}
 which satisfies that the following relations for all $e\in E(\Gamma)$:
\begin{align}
\label{def-rel}
N_{e}(f(p))-f(q)=f(q)_{\overline{e}}c_{(\Gamma,\alpha,\nabla)}(\overline{e})
\end{align}
where $i(e)=p$, $t(e)=q$, $N_{e}:\mathbb{Z}E_{p}(\Gamma)\to \mathbb{Z}E_{q}(\Gamma)$ is the square matrix defined in \eqref{def_N_e} and $f(q)_{\overline{e}}\in \mathbb{Z}$ is the integer defined just before.
This module $\mathcal{A}(\Gamma,\alpha,\nabla)$ is said to be a {\it group of axial functions} of $(\Gamma,\alpha,\nabla)$ (also see Remark~\ref{reason-name}).
\end{definition}

\begin{remark}
Because the set $\{f:V(\Gamma)\to \mathbb{Z}^{m}\}$ is a finitely generated free $\mathbb{Z}$-module 
with rank $m|V(\Gamma)|$ i.e., a free abelian group with finite rank, its submodule $\mathcal{A}(\Gamma,\alpha,\nabla)$ is so too.
It is also easy to check that two groups of axial functions with  different orders for $E_{p}(\Gamma)$ (for any $p\in V(\Gamma)$) 
are isomorphic. 
\end{remark}

\begin{remark}
We give a brief remark for our group $\mathcal{A}(\Gamma,\alpha,\nabla)$ from the point of view of the GKM theory.
Let $M$ be a GKM manifold and $(\Gamma_{M},\alpha_{M},\nabla_{M})$ be its induced GKM graph (see Section~\ref{sect:4}).
Recall the famous GKM description of equivariant cohomology of $M$ (see e.g. \cite{GKM, GHZ, GuZa}).
The GKM description of equivariant cohomology of $M$ can be defined by taking the {\it global sections of a sheaf} 
of $(\Gamma_{M},\alpha_{M},\nabla_{M})$ in the sense of Braden-MacPherson~\cite{BrMa} (also see~\cite{Fi}); 
such sheaf is called a {\it structure sheaf} of $(\Gamma_{M},\alpha_{M},\nabla_{M})$ in \cite{Fi} (or {\it sheaf of rings} in \cite{BrMa}). 
The ring defined by $(\Gamma_{M},\alpha_{M},\nabla_{M})$ is often denoted as $H^{*}(\Gamma_{M},\alpha_{M})$ 
and it is isomorphic to the torus equivariant cohomology $H_{T}^{*}(M;\mathbb{Q})$ of $M$ with the rational coefficient, 
if $M$ satisfies the condition called an {\it equivariantly formal} (see \cite{GKM}).
On the other hand, a group of axial functions $\mathcal{A}(\Gamma_{M},\alpha_{M},\nabla_{M})$ can be defined 
by taking a (little bit modified) global sections of another sheaf on $\Gamma$, which is induced from $c_{(\Gamma_{M},\alpha_{M},\nabla_{M})}$.
We omit the details of this process 
because it is apart from the purpose of this paper.
We also finally note that a connection $\nabla_{M}$ is not used to define $H^{*}(\Gamma_{M},\alpha_{M})$; however, 
for the case of $\mathcal{A}(\Gamma_{M},\alpha_{M},\nabla_{M})$, the connection plays the essential role (see Remark~\ref{reason-name}).
\end{remark}

The following corollary follows immediately from Definition~\ref{def-main} and Proposition~\ref{prop:2-5}:
\begin{corollary}
\label{cor:2-11}
Let $(\Gamma,\alpha,\nabla)$ be a GKM graph and $(\Gamma,\widetilde{\alpha},\nabla)$ be its extension.
Then, two groups of axial functions are isomorphic, i.e., $\mathcal{A}(\Gamma,\alpha,\nabla)\simeq \mathcal{A}(\Gamma,\widetilde{\alpha},\nabla)$. 
\end{corollary}

We note the following property of E.q.~\eqref{def-rel} which will be useful to compute $\mathcal{A}(\Gamma,\alpha,\nabla)$.
\begin{lemma}
\label{lem:2-12}
Let $f:V(\Gamma)\to \mathbb{Z}^{m}$ be any function. 
If E.q.~\eqref{def-rel} holds for some edge $e\in E(\Gamma)$, then $f(p)_{e}=-f(q)_{\overline{e}}$ and 
E.q.~\eqref{def-rel} also holds for the edge $\overline{e}$, where $p=i(e)$ and $q=t(e)$.
\end{lemma}
\begin{proof} 
Let $e=e_{j,p}$ and $\overline{e}=e_{\sigma(j),q}$, where $\sigma$ is the permutation induced from $\nabla_{e}$. 
Put
\begin{align*}
f(p)=\begin{pmatrix}
k_{1,p} \\
\vdots \\
k_{m,p}
\end{pmatrix}.
\end{align*}
Then, $f(p)_{e}=k_{j,p}$ and
\begin{align*}
N_{e}(f(p))=
\begin{pmatrix}
x_{1} \\
\vdots \\
x_{m}
\end{pmatrix}
=\begin{pmatrix}
k_{\sigma^{-1}(1),p} \\
\vdots \\
k_{\sigma^{-1}(m),p}
\end{pmatrix}.
\end{align*}
This shows that $N_{e}(f(p))_{\overline{e}}=x_{\sigma(j)}=k_{j,p}=f(p)_{e}.$
Therefore, together with Lemma~\ref{lem:2-4}, we have 
\[
N_{e}(f(p))_{\overline{e}}-f(q)_{\overline{e}}=f(p)_{e}-f(q)_{\overline{e}}=-2f(q)_{\overline{e}}.
\]
So $f(p)_{e}=-f(q)_{\overline{e}}$.
Note that $N_{\overline{e}}=N_{e}^{-1}$ because $\nabla_{\overline{e}}=\nabla_{e}^{-1}$.
Thus, evaluating E.q.~\eqref{def-rel} by $N_{\overline{e}}$ and using Lemm~\ref{lem:2-6}, 
we have 
\begin{align*}
f(p)-N_{\overline{e}}(f(q))=-f(p)_{e}N_{\overline{e}}(c_{(\Gamma,\alpha,\nabla)}(\overline{e}))=-f(p)_{e}c_{(\Gamma,\alpha,\nabla)}(e).
\end{align*}
This establishes the statement.
\end{proof}

\begin{example}
\label{eg-O}
Before proving the main theorem, 
let us compute the $\mathcal{A}(\Gamma,\alpha,\nabla)$ of $(\Gamma,\alpha,\nabla)$ in Example~\ref{exam:2-7}.
By definition of $\mathcal{A}(\Gamma,\alpha,\nabla)$, we first have
\begin{align*}
\mathcal{A}(\Gamma,\alpha,\nabla)&=\{ f:\{p,q\}\to \mathbb{Z}^{3} \mid  
 N_{e_{i}}(f(p))-f(q)=f(q)_{\overline{e}_{i}} c_{(\Gamma,\alpha,\nabla)}(\overline{e}_{i})\}
\end{align*}
Put $f(p)=(x,y,z)\in \mathbb{Z}E_{p}(\Gamma)$ and $f(q)=(x',y',z')\in \mathbb{Z}E_{q}(\Gamma)$.
Then, by Lemma~\ref{lem:2-12}, $x'=-x$, $y'=-y$, $z'=-z$.
Therefore, for example for the case when $i=1$, the relation of $\mathcal{A}(\Gamma,\alpha,\nabla)$ says that 
\begin{align*}
\begin{pmatrix}
1 & 0 & 0 \\
0 & 0 & 1 \\
0 & 1 & 0
\end{pmatrix}
\begin{pmatrix}
x \\
y \\
z
\end{pmatrix}
-
\begin{pmatrix}
-x \\
-y \\
-z
\end{pmatrix}
=
-x
\begin{pmatrix}
-2 \\
1 \\
1
\end{pmatrix}
\end{align*}
Hence, we also have the relation $x+y+z=0$.
Similarly, computing for the other edges $e_{2}, e_{3}$ (Lemma~\ref{lem:3-2} proved later may be also useful), 
we get 
\begin{align*}
\mathcal{A}(\Gamma,\alpha,\nabla)=
\{(f(p),f(q))=((x,y,z),(-x,-y,-z)) \mid x+y+z=0\} (\simeq \mathbb{Z}^{2}).
\end{align*}
\end{example}

\section{Main theorem}
\label{sect:3}

In this section, we prove the following main theorem.
\begin{theorem} 
\label{thm-main}
Let $(\Gamma,\alpha,\nabla)$ be an $(m,n)$-type GKM graph.
Then the following two statements are equivalent:
\begin{enumerate}
\item there is an (m,$\ell$)-type GKM graph which is an extension of $(\Gamma,\alpha,\nabla)$ for some $\ell\ge n$;
\item $\ell \le {\rm rk}~\mathcal{A}(\Gamma,\alpha,\nabla) (\le m)$.
\end{enumerate}
In particular, if ${\rm rk}~\mathcal{A}(\Gamma,\alpha,\nabla)=k$, 
then there is an extended (m,k)-type GKM graph $(\Gamma,\widetilde{\alpha},\nabla)$ which is the maximal among extensions.
\end{theorem}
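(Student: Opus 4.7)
The plan is to establish a precise duality between the group $\mathcal{A}(\Gamma,\alpha,\nabla)$ and axial functions of extensions, and then read off each direction of the equivalence from that duality. Throughout I fix an order on each $E_{p}(\Gamma)$ as in Section~\ref{sect:2.2}, so that $N_{e}\in GL(m;\mathbb{Z})$ and $c_{(\Gamma,\alpha,\nabla)}(e)\in\mathbb{Z}^{m}$ are honest matrices and vectors.

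For the direction $(1)\Rightarrow(2)$, I would take an $(m,\ell)$-type extension $(\Gamma,\widetilde{\alpha},\nabla)$ and associate to each $\phi\in\mathrm{Hom}(H^{2}(BT^{\ell}),\mathbb{Z})$ the function $f_{\phi}:V(\Gamma)\to\mathbb{Z}^{m}$ given by $f_{\phi}(p)_{j}=\phi(\widetilde{\alpha}(e_{j,p}))$. A direct bookkeeping check, applying $\phi$ to the congruence relation~\eqref{cong-rel} for $\widetilde{\alpha}$ and using Lemma~\ref{lem:2-5}, shows that $f_{\phi}$ satisfies~\eqref{def-rel}, so $f_{\phi}\in\mathcal{A}(\Gamma,\alpha,\nabla)$. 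The assignment $\phi\mapsto f_{\phi}$ is $\mathbb{Z}$-linear and injective: if $f_{\phi}=0$, then $\phi$ annihilates every $\widetilde{\alpha}(e_{j,p})$, and effectivity (axiom (4)) at any vertex forces $\phi=0$. Hence $\mathrm{rk}\,\mathcal{A}(\Gamma,\alpha,\nabla)\geq\ell$.

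For the direction $(2)\Rightarrow(1)$ and the maximality clause, let $k=\mathrm{rk}\,\mathcal{A}(\Gamma,\alpha,\nabla)$ and form the dual $\mathcal{A}^{*}=\mathrm{Hom}(\mathcal{A}(\Gamma,\alpha,\nabla),\mathbb{Z})\cong\mathbb{Z}^{k}$. I would define $\widetilde{\alpha}:E(\Gamma)\to\mathcal{A}^{*}$ by $\widetilde{\alpha}(e_{j,p})(f)=f(p)_{j}$. The required verifications are: (i) $\widetilde{\alpha}(\overline{e})=-\widetilde{\alpha}(e)$, which is immediate from Lemma~\ref{lem:2-11}; (ii) the congruence relation for $\widetilde{\alpha}$, which, after pairing with an arbitrary $f\in\mathcal{A}(\Gamma,\alpha,\nabla)$, becomes precisely the defining relation~\eqref{def-rel} combined with Lemmas~\ref{lem:2-5} and~\ref{lem:2-11}; (iii) pairwise linear independence at each vertex, obtained by pairing any hypothetical relation against the family $\{f_{\phi}\}$ attached to the original $\alpha$ from the previous step, which would force an illegal dependence in $\alpha_{(p)}$; and (iv) effectivity at each vertex. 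The projection $\pi:\mathcal{A}^{*}\to H^{2}(BT^{n})$ is defined as the dual of $\phi\mapsto f_{\phi}$, and $\pi\circ\widetilde{\alpha}=\alpha$ holds by construction. For intermediate values $n\leq\ell<k$, an $(m,\ell)$-type extension is then obtained by further factoring $\widetilde{\alpha}$ through a rank-$\ell$ quotient of $\mathcal{A}^{*}$ whose kernel lies in $\ker\pi$ and, by a sufficiently generic choice, avoids the finitely many rank-$2$ sublattices spanned by pairs $\{\widetilde{\alpha}(e_{i,p}),\widetilde{\alpha}(e_{j,p})\}$, thereby preserving pairwise independence.

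The main technical obstacle is step~(iv): the $\widetilde{\alpha}(e_{j,p})$ must generate $\mathcal{A}^{*}$ as a $\mathbb{Z}$-module, not merely over $\mathbb{Q}$. I would reduce this to showing that the evaluation map $\mathrm{ev}_{p}:\mathcal{A}(\Gamma,\alpha,\nabla)\to\mathbb{Z}^{m}$, $f\mapsto f(p)$, is an injection onto a \emph{saturated} rank-$k$ sublattice $V_{p}\subseteq\mathbb{Z}^{m}$; for then the restriction map $(\mathbb{Z}^{m})^{*}\twoheadrightarrow V_{p}^{*}\cong\mathcal{A}^{*}$ is surjective and sends the standard coordinate functionals exactly to the generators $\widetilde{\alpha}(e_{j,p})$. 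Injectivity of $\mathrm{ev}_{p}$ follows from~\eqref{def-rel} and Lemma~\ref{lem:2-11}, which together give the propagation formula $f(q)=N_{e}(f(p))+f(p)_{e}\,c_{(\Gamma,\alpha,\nabla)}(\overline{e})$ on each edge, so $f(p)$ determines $f$ globally by connectedness of $\Gamma$. Saturation is the delicate point: given $v\in\mathbb{Z}^{m}$ and $f\in\mathcal{A}(\Gamma,\alpha,\nabla)$ with $dv=f(p)$ for some $d\geq 1$, the rational function $g=f/d$ satisfies~\eqref{def-rel} by linearity and, starting from $g(p)=v\in\mathbb{Z}^{m}$, has integer values at every vertex because the propagation formula preserves integrality (as $N_{e}\in GL(m;\mathbb{Z})$ and $c_{(\Gamma,\alpha,\nabla)}(\overline{e})\in\mathbb{Z}^{m}$); hence $g\in\mathcal{A}(\Gamma,\alpha,\nabla)$ and $v\in V_{p}$. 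Once $V_{p}$ is saturated, effectivity of $\widetilde{\alpha}$ at every vertex follows, and the construction of the maximal $(m,k)$-extension is complete.
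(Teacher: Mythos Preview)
Your proof is correct and follows a route closely parallel to the paper's, but packaged in a coordinate-free, dual fashion. The paper proves $(1)\Rightarrow(2)$ via Lemma~\ref{lem:3-2}, constructing $f_{1},\dots,f_{n}\in\mathcal{A}(\Gamma,\alpha,\nabla)$ from the coordinate projections of $\alpha$ and invoking Corollary~\ref{cor:2-10}; your map $\phi\mapsto f_{\phi}$ is exactly this construction for all $\phi\in H^{2}(BT^{\ell})^{*}$ at once. For $(2)\Rightarrow(1)$ the paper chooses independent $f_{1},\dots,f_{\ell}$ and assembles $\widetilde{\alpha}=\bigoplus_{i}\alpha_{f_{i}}^{a_{i}}$ in Lemma~\ref{lem:3-4}; your definition $\widetilde{\alpha}(e_{j,p})(f)=f(p)_{j}$ into $\mathcal{A}^{*}$ is the basis-free version of the same map, and once one fixes a basis of $\mathcal{A}$ the two constructions coincide literally.

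The genuine addition in your write-up is the saturation argument for $V_{p}=\mathrm{ev}_{p}(\mathcal{A})$. The paper asserts in the proof of Lemma~\ref{lem:3-4} that the matrix $(k^{(i)}_{j,p})$ has an $\ell\times\ell$ minor of determinant $\pm 1$, but the justification given there (that $\{f_{1}(p),\dots,f_{\ell}(p)\}$ spans a subgroup isomorphic to $\mathbb{Z}^{\ell}$) does not by itself yield such a minor when $\ell<k$. Your observation that the propagation formula $f(q)=N_{e}(f(p))+f(p)_{e}\,c_{(\Gamma,\alpha,\nabla)}(\overline{e})$ preserves integrality, hence $V_{p}$ is saturated, cleanly fills this point for the maximal extension $\ell=k$ and is a worthwhile clarification.

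One simplification: in your last step, the ``sufficiently generic'' choice of kernel is unnecessary. Since you require $\ker q\subset\ker\pi$, any nontrivial relation $a\,q(\widetilde{\alpha}(e_{i}))+b\,q(\widetilde{\alpha}(e_{j}))=0$ would push down to $a\,\alpha(e_{i})+b\,\alpha(e_{j})=0$ in $H^{2}(BT^{n})$, contradicting pairwise independence of $\alpha$; so axiom~(2) for the intermediate quotient is automatic. With that remark, every rank-$\ell$ quotient of $\mathcal{A}^{*}$ whose kernel is a direct summand contained in $\ker\pi$ works.
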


\subsection{Proof of $(1)\Rightarrow (2)$}
\label{sect:3.1}

We first prove $(1)\Rightarrow (2)$ in Theorem~\ref{thm-main}.
The following lemma is the key lemma:
\begin{lemma}
\label{lem:3-2}
Let $(\Gamma,\alpha,\nabla)$ be an $(m,n)$-type GKM graph.
Then, the rank of $\mathcal{A}(\Gamma,\alpha,\nabla)$ satisfies the following inequality:
\[
n\le {\rm rk}~\mathcal{A}(\Gamma,\alpha,\nabla)\le m.
\]
\end{lemma}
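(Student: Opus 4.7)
The plan is to establish the two inequalities separately. The upper bound $\mathrm{rk}\,\mathcal{A}(\Gamma,\alpha,\nabla)\le m$ will come from a rigidity argument (values at one vertex determine $f$ on all of $\Gamma$), and the lower bound $n\le \mathrm{rk}\,\mathcal{A}(\Gamma,\alpha,\nabla)$ from an explicit injection built out of the axial function itself.

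For the upper bound, I would fix a base vertex $p_{0}\in V(\Gamma)$ and consider the evaluation homomorphism
\[
\mathrm{ev}_{p_{0}}:\mathcal{A}(\Gamma,\alpha,\nabla)\longrightarrow \mathbb{Z}E_{p_{0}}(\Gamma)\simeq \Z^{m},\qquad f\mapsto f(p_{0}).
\]
Since the target has rank $m$, it suffices to show $\mathrm{ev}_{p_{0}}$ is injective. For any $f\in \mathcal{A}(\Gamma,\alpha,\nabla)$ and any edge $e$ with $i(e)=p$, $t(e)=q$, Lemma~\ref{lem:2-11} first forces $f(q)_{\overline{e}}=-f(p)_{e}$, after which the defining relation~\eqref{def-rel} determines the entire vector
\[
f(q)=N_{e}(f(p))+f(p)_{e}\,c_{(\Gamma,\alpha,\nabla)}(\overline{e}).
\]
Thus $f(q)$ is determined by $f(p)$ across every edge, and since $\Gamma$ is connected, propagating along a spanning tree rooted at $p_{0}$ recovers $f$ from $f(p_{0})$.

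For the lower bound, I would construct an injective $\Z$-linear map
\[
\Phi:\mathrm{Hom}_{\Z}(H^{2}(BT^{n}),\Z)\longrightarrow \mathcal{A}(\Gamma,\alpha,\nabla),\qquad \bigl(\Phi(\xi)(p)\bigr)_{e}:=\langle \xi,\alpha(e)\rangle\ \ (e\in E_{p}(\Gamma)).
\]
To see $\Phi(\xi)\in \mathcal{A}(\Gamma,\alpha,\nabla)$, pair the congruence relation~\eqref{cong-rel} applied at $\overline{e}$ with $\xi$ and read off the result componentwise; using $\Phi(\xi)(q)_{\overline{e}}=\langle\xi,\alpha(\overline{e})\rangle=-\langle \xi,\alpha(e)\rangle$, this reproduces~\eqref{def-rel} exactly. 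Injectivity of $\Phi$ is where axiom (4) enters: if $\Phi(\xi)=0$, then $\langle \xi,\alpha(e)\rangle=0$ for every $e\in E_{p}(\Gamma)$, and since $\alpha_{(p)}$ spans $H^{2}(BT^{n})$, this forces $\xi=0$. As the source has rank $n$, we conclude $\mathrm{rk}\,\mathcal{A}(\Gamma,\alpha,\nabla)\ge n$.

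The only delicate step will be the bookkeeping in verifying~\eqref{def-rel} for $\Phi(\xi)$: one has to align the permutation implicit in $N_{e}$ with the relabeling $e'\mapsto \nabla_{e}(e')$ appearing in~\eqref{cong-rel}, so that the $\overline{e}'$-component of $N_{e}(\Phi(\xi)(p))$ pairs $\xi$ with the correct edge at $q=t(e)$. Once this identification is made, the check is a single substitution into the congruence relation, and the rest of the argument is formal.
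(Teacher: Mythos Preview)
Your proposal is correct and follows essentially the same approach as the paper. For the upper bound both you and the paper show that the evaluation map at a single vertex is injective by propagating the defining relation along edges. For the lower bound your map $\Phi:\mathrm{Hom}_{\Z}(H^{2}(BT^{n}),\Z)\to\mathcal{A}(\Gamma,\alpha,\nabla)$ is a coordinate-free packaging of the paper's argument: the paper fixes a basis $a_{1},\dots,a_{n}$ of $H^{2}(BT^{n})$ and exhibits the elements $f_{i}$ obtained from the $i$-th coordinate of $\alpha$, which are exactly your $\Phi(a_{i}^{*})$ for the dual basis $a_{i}^{*}$; the verification of \eqref{def-rel} and the use of axiom (4) for independence/injectivity are the same in both treatments.
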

\begin{proof}
We first prove the inequality ${\rm rk}~\mathcal{A}(\Gamma,\alpha,\nabla)\le m$.
By definition, $f\in \mathcal{A}(\Gamma,\alpha,\nabla)\subset \oplus_{p\in V(\Gamma)}\mathbb{Z}E_{p}(\Gamma)$.
Under the same notations in E.q.~\eqref{def-rel}, we put 
\begin{align*}
f(p)=
\begin{pmatrix}
x_{1} \\
\vdots \\
x_{m}
\end{pmatrix},\quad 
f(q)=
\begin{pmatrix}
y_{1} \\
\vdots \\
y_{m}
\end{pmatrix},\quad
c_{(\Gamma,\alpha,\nabla)}(e)=
\begin{pmatrix}
k_{1,p} \\
\vdots \\
k_{m,p}
\end{pmatrix}
\end{align*}
where $x_{j}$, $y_{j}$ are variables and $k_{j,p}\in \mathbb{Z}$ for all $j=1,\ \ldots,\ m$.
Put $f(p)_{e}=x_{j}$.
Then, by E.q.~\eqref{def-rel} for $\overline{e}$, for all $i=1,\ \ldots,\ m$ the following equation holds:
\begin{align*}
y_{\sigma(i)}-x_{i}=x_{j}k_{i,p},
\end{align*}
where $\sigma$ is the permutation on $\{1,\ldots, m\}$ induced from $\nabla_{e}$.
This implies that once we choose the value $f(p)\in \mathbb{Z}^{m}$ for a vertex $p$ which connects with $q$ by an edge, 
then the value $f(q)\in \mathbb{Z}^{m}$ is automatically and uniquely determined.
Because $\Gamma$ is a connected graph, 
iterating this argument on each edge, we can determine the value $f(r)$ uniquely for all $r\in V(\Gamma)$ if we choose a value of $f(p)$.
This implies that the restriction map
\begin{align}
\label{rest-map}
\rho_{p}:\mathcal{A}(\Gamma,\alpha,\nabla)\to \mathbb{Z}^{m}\ \text{such that}\ \rho_{p}(f)=f(p)
\end{align} 
is the injective homomorphism for any vertex $p\in V(\Gamma)$, which proves 
${\rm rk}~\mathcal{A}(\Gamma,\alpha,\nabla)\le m$.

We next prove the rest inequality $n\le {\rm rk}~\mathcal{A}(\Gamma,\alpha,\nabla)$.
Because $(\Gamma,\alpha,\nabla)$ is an $(m,n)$-type GKM graph, 
taking a linear basis of $H^{2}(BT^{n})$ as $\{a_{1},\ldots,a_{n}\}$,
its axial function can be written as $\alpha:E(\Gamma)\to H^{2}(BT^{n})=\mathbb{Z}a_{1}\oplus\cdots \oplus \mathbb{Z}a_{n}\simeq \mathbb{Z}^{n}$.
Put $\pi_{i}:H^{2}(BT^{n})\to \mathbb{Z}a_{i}$ be the projection onto the $i$th coordinate of $H^{2}(BT^{n})$ with respect to this basis.
Define 
\[
\alpha_{i}:E(\Gamma)\stackrel{\alpha}{\longrightarrow} H^{2}(BT^{n})\stackrel{\pi_{i}}{\longrightarrow}\mathbb{Z}a_{i}.
\] 
Recall that we choose an order on $E_{p}(\Gamma)=\{e_{1,p},\ldots,e_{m,p}\}$ for each $p\in V(\Gamma)$.
Put 
\[
\alpha_{i}(e_{j,p})=k_{j,p}^{(i)}a_{i}
\] 
for some $k_{j,p}^{(i)}\in \mathbb{Z}$.
Then, the map $f_{i}:V(\Gamma)\to \mathbb{Z}^{m}$ is defined by 
\begin{align*}
f_{i}(p)=
\begin{pmatrix}
k_{1,p}^{(i)} \\
\vdots \\
k_{m,p}^{(i)}
\end{pmatrix}
\in \mathbb{Z}E_{p}(\Gamma)\simeq \mathbb{Z}^{m},
\end{align*}
for each $p\in V(\Gamma)$.
We claim that $f_{i}\in \mathcal{A}(\Gamma,\alpha,\nabla)$ and $\{f_{1},\ldots, f_{n}\}$ spans the rank $n$ submodule in $\mathcal{A}(\Gamma,\alpha,\nabla)$.
Let $p=i(e),\ q=t(e)\in V(\Gamma)$ for some $e\in E(\Gamma)$.
In order to prove $f_{i}\in \mathcal{A}(\Gamma,\alpha,\nabla)$, by definition, it is enought to show the equation
\begin{align}
\label{eq_lem:3-2}
 N_{e}(f_{i}(p))-f_{i}(q)=f_{i}(q)_{\overline{e}}c_{(\Gamma,\alpha,\nabla)}(\overline{e}).
\end{align}
Now we have
\[
\alpha(\nabla_{e}(e_{j,p}))=\alpha(e_{\sigma(j),q})=\alpha(e_{j,p})+c_{e}(e_{j,p})\alpha(e).
\]
Taking $\pi_{i}$ on these equations, we obtain 
\[
\alpha_{i}(\nabla_{e}(e_{j,p}))=k_{\sigma(j),q}^{(i)}a_{i}=k_{j,p}^{(i)}a_{i}+c_{e}(e_{j,p}) k_{e}^{(i)}a_{i}
\]
for all $j=1,\ldots,m$,
where $k_{e}^{(i)}a_{i}=\alpha_{i}(e)=f_{i}(p)_{e}a_{i}$.
Therefore, we have 
\begin{align*}
\begin{pmatrix}
k_{\sigma(1),q}^{(i)} \\
\vdots \\
k_{\sigma(m),q}^{(i)}
\end{pmatrix}
=
\begin{pmatrix}
k_{1,p}^{(i)} \\
\vdots \\
k_{m,p}^{(i)}
\end{pmatrix}
+k_{e}^{(i)}
\begin{pmatrix}
c_{e}(e_{1,p}) \\
\vdots \\
c_{e}(e_{m,p})
\end{pmatrix}
\end{align*}
Because $N_{\overline{e}}=N_{e}^{-1}$ (where $N_{e}$ is defined by $\nabla_{e}$), we have the following equation from this equation: 
\[
N_{\overline{e}}(f_{i}(q))=f_{i}(p)+f_{i}(p)_{e}c_{(\Gamma,\alpha,\nabla)}(e).
\]
Thus, by multiplying $N_{e}$ and using Lemma \ref{lem:2-6}, we have  that 
\[
N_{e}(f_{i}(p))-f_{i}(q)=-f_{i}(p)_{e}c_{(\Gamma,\alpha,\nabla)}(\overline{e}).
\]
Now, by definition of $\alpha_{i}$, we have that $f_{i}(p)_{e}=-f_{i}(q)_{\overline{e}}$.
Hence, we obtain E.q.~\eqref{eq_lem:3-2} and establish that $f_{i}\in \mathcal{A}(\Gamma,\alpha,\nabla)$ for all $i=1,\ldots,n$.
Now, by definition of the effective axial function, the collection  
$\{ \alpha(e_{p,1}),\ldots, \alpha(e_{p,m})\}$ spans $H^{2}(BT^{n})\simeq \mathbb{Z}^{n}$ for each $p\in V(\Gamma)$.
This implies that for every $p\in V(\Gamma)$ there is a collection  
$\{f_{1}(p),\ldots,f_{n}(p)\}$ which spans some $n$-dimensional subspace in $\mathbb{Z}E_{p}(\Gamma)(\simeq \mathbb{Z}^{m})$.
Because the restriction map~\eqref{rest-map} is injective, this also implies that the set of functions 
$\{f_{1},\ldots,f_{n}\}$ spans some $n$-dimensional submodule in $\mathcal{A}(\Gamma,\alpha,\nabla)(\subset \mathbb{Z}^{m})$.
This establishes that $n\le {\rm rk}~\mathcal{A}(\Gamma,\alpha,\nabla)$.
\end{proof}

Assume that there is an $(m,\ell)$-type GKM graph $(\Gamma,\widetilde{\alpha},\nabla)$ which is an extension of $(\Gamma,\alpha,\nabla)$ for some $n\le \ell \le m$.
Then, it easily follows from Corollary~\ref{cor:2-11} and Lemma~\ref{lem:3-2} that 
\begin{align*}
\ell\le {\rm rk}~\mathcal{A}(\Gamma,\widetilde{\alpha},\nabla)={\rm rk}~\mathcal{A}(\Gamma,\alpha,\nabla)\le m.
\end{align*}
This establishes the statement $(1)\Rightarrow (2)$ in Theorem~\ref{thm-main}.

\subsection{Proof of $(2)\Rightarrow (1)$}
\label{sect:3.2}
We next prove $(2)\Rightarrow (1)$ in Theorem~\ref{thm-main} for an $(m,n)$-type GKM graph $(\Gamma,\alpha,\nabla)$.
Assume that
\[
\ell\le {\rm rank}\ \mathcal{A}(\Gamma,\alpha,\nabla) (\le m)
\] 
for some $\ell\ge n$.
We shall prove that there exists an extension $(m,\ell)$-type GKM graph $(\Gamma,\widetilde{\alpha},\nabla)$ of the $(m,n)$-type GKM graph $(\Gamma,\alpha,\nabla)$.

Let $f\in \mathcal{A}(\Gamma,\alpha,\nabla)$.
Put the order of $E_{p}(\Gamma)$ as $\{e_{1,p},\ldots,e_{m,p}\}$ for $p\in V(\Gamma)$ and 
\begin{align*}
f(p)=
\begin{pmatrix}
k_{1,p} \\
\vdots \\
k_{m,p}
\end{pmatrix}
\end{align*}
with respect to this order.
Then, we may define $\alpha_{f}^{a}:E(\Gamma)\to \Z a$ for every $a\in H^{2}(BT^{n})$ by 
\begin{align*}
\alpha_{f}^{a}(e_{j,p})=k_{j,p}a.
\end{align*}
We call this label $\alpha_{f}^{a}$ on edges an {\it $a$-labeling induced from $f\in \mathcal{A}(\Gamma,\alpha,\nabla)$}.
In the proof of Lemma~\ref{lem:3-2}, it is easy to see that $\alpha_{i}:=\pi_{i}\circ\alpha=\alpha_{f_{i}}^{a_{i}}$ for $i=1,\ldots,n$.
Therefore, we obtain the following corollary from the proof of Lemma~\ref{lem:3-2}, which is the key fact to prove $(2)\Rightarrow (1)$ and 
tells us that the axial function $\alpha$ can be recovered from $\mathcal{A}(\Gamma,\alpha,\nabla)$ by using $\alpha_{f}^{a}$.
\begin{corollary}
\label{cor:3-3}
Let $(\Gamma,\alpha,\nabla)$ be an $(m,n)$-type GKM graph.
Then, there exsits $f_{i}\in \mathcal{A}(\Gamma,\alpha,\nabla)$ for $i=1,\ldots,n$ 
such that $\{f_{1},\ldots,f_{n}\}$ spans an $n$-dimensional subspace of $\mathcal{A}(\Gamma,\alpha,\nabla)$
and 
for the fixed basis $a_{1},\ldots,a_{n}$ of $H^{2}(BT^{n})$
the axial function can be split into
\[
\alpha_{1}\oplus \cdots \oplus \alpha_{n}=\alpha:E(\Gamma)\to H^{2}(BT^{n})=\bigoplus_{i=1}^{n}\Z a_{i}
\]
where $\alpha_{i}:=\alpha_{f_{i}}^{a_{i}}$ is the $a_{i}$-labeling induced from $f_{i}$.
\end{corollary}

Because $\ell\le {\rm rk}\ \mathcal{A}(\Gamma,\alpha,\nabla)$, there are independent elements (as free $\mathbb{Z}$-module)
\[
f_{1},\ldots,f_{\ell}\in \mathcal{A}(\Gamma,\alpha,\nabla).
\]
Moreover, because of $\ell\ge n$,
we may choose the first part $f_{1},\ldots,f_{n}$ as the basis induced from $(\Gamma,\alpha,\nabla)$ in Corollary~\ref{cor:3-3} (also see the definitions of $f_{i}$'s in the proof of Lemma~\ref{lem:3-2}), and put
\begin{align}
\label{axial-splits-coeff}
f_{i}(p)=
\begin{pmatrix}
k_{1,p}^{(i)} \\
\vdots \\
k_{m,p}^{(i)}
\end{pmatrix}
\in \mathbb{Z}E_{p}(\Gamma)\simeq \mathbb{Z}^{m},
\end{align}
for $i=1,\ldots, \ell$ and $p\in V(\Gamma)$.
Fix the basis of $H^{2}(BT^{\ell})$ as $a_{1},\ldots,a_{\ell}$, where the first $n$ elements $a_{1},\ldots,a_{n}$ are the basis of $H^{2}(BT^{n})$ (see Corollary~\ref{cor:3-3}), where $T^{n}\subset T^{\ell}$.
Let $\alpha_{i}$ be the $a_{i}$-labeling induced from $f_{i}$, i.e., $\alpha_{i}=\alpha_{f_{i}}^{a_{i}}$, for $i=1,\ldots,\ell$.
Then, we can define the function as follows: 
\[
\widetilde{\alpha}:=\oplus_{i=1}^{\ell}\alpha_{i}:E(\Gamma)\to H^{2}(BT^{\ell}).
\]
The following lemma says that the triple $(\Gamma,\widetilde{\alpha},\nabla)$ is a GKM graph extending $(\Gamma,\alpha,\nabla)$.
\begin{lemma}
\label{lem:3-4}
The triple $(\Gamma,\widetilde{\alpha},\nabla)$ defined as above is an $(m,\ell)$-type GKM graph which is an extension of $(m,n)$-type GKM graph $(\Gamma,\alpha,\nabla)$.
\end{lemma}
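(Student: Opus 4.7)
The plan is to verify the four axioms of an axial function for $\widetilde{\alpha}$ with the same connection $\nabla$, and then to exhibit a projection making $(\Gamma,\widetilde{\alpha},\nabla)$ an extension of $(\Gamma,\alpha,\nabla)$. The extension property is essentially built in: take $\pi : H^{2}(BT^{\ell}) \to H^{2}(BT^{n})$ to be the projection sending $a_{i} \mapsto a_{i}$ for $i \le n$ and $a_{i} \mapsto 0$ for $i > n$. The choice of $f_{1},\ldots,f_{n}$ supplied by Lemma~\ref{lem:3-3} then gives $\pi \circ \widetilde{\alpha} = \alpha_{1}\oplus\cdots\oplus \alpha_{n}=\alpha$, so the required diagram commutes.

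Axioms (1) and (3) follow from the fact that each $f_{i}$ lies in $\mathcal{A}(\Gamma,\alpha,\nabla)$. For axiom (1), Lemma~\ref{lem:2-11} applied to $f_{i}$ gives $f_{i}(p)_{e}=-f_{i}(q)_{\overline{e}}$, i.e., $\alpha_{i}(e)=-\alpha_{i}(\overline{e})$; summing over $i$ yields $\widetilde{\alpha}(\overline{e})=-\widetilde{\alpha}(e)$. For axiom (3), I would read the defining relation~\eqref{def-rel} for each $f_{i}$ component-wise (this is exactly the calculation in the proof of Lemma~\ref{lem:3-3} run backwards, using Lemma~\ref{lem:2-5} to relate $c_{(\Gamma,\alpha,\nabla)}(e)$ and $c_{(\Gamma,\alpha,\nabla)}(\overline{e})$) and obtain $\alpha_{i}(\nabla_{e}(e'))-\alpha_{i}(e')=c_{e}(e')\,\alpha_{i}(e)$ with the same integer coefficient $c_{e}(e')$ that already appears in the congruence relation for $\alpha$. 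Summing over $i$ establishes the congruence relation for $\widetilde{\alpha}$ with the connection $\nabla$ unchanged; the compatibility of the integers $c_{e}(e')$ across all coordinates is precisely what Proposition~\ref{prop:2-4} predicts.

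Axioms (2) and (4) use the $\mathbb{Z}$-independence of $f_{1},\ldots,f_{\ell}$. For pairwise linear independence, if $\widetilde{\alpha}(e_{j,p})$ and $\widetilde{\alpha}(e_{j',p})$ were dependent in $H^{2}(BT^{\ell})$, applying $\pi$ would force $\alpha(e_{j,p})$ and $\alpha(e_{j',p})$ to be dependent in $H^{2}(BT^{n})$, contradicting axiom (2) for $\alpha$. For effectivity, the injectivity of the restriction map $\rho_{p}$ from the proof of Lemma~\ref{lem:3-2} transfers the independence of $f_{1},\ldots,f_{\ell}$ in $\mathcal{A}(\Gamma,\alpha,\nabla)$ to that of the vectors $f_{1}(p),\ldots,f_{\ell}(p)$ in $\mathbb{Z}E_{p}(\Gamma)$; these are the columns of the $m\times \ell$ coefficient matrix whose rows are the elements $\widetilde{\alpha}(e_{j,p})$, so the matrix has rank $\ell$ and its rows span $H^{2}(BT^{\ell})$. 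The step most likely to require extra care is this last one if the word ``spans'' in axiom (4) is taken integrally rather than rationally: one would then need to choose $f_{n+1},\ldots,f_{\ell}$ as a sufficiently primitive $\mathbb{Z}$-basis of a rank-$\ell$ subgroup of $\mathcal{A}(\Gamma,\alpha,\nabla)$ containing $f_{1},\ldots,f_{n}$, so that the rows of the coefficient matrix actually generate $\mathbb{Z}^{\ell}$ at every vertex.
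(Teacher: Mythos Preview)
Your proposal is correct and follows essentially the same route as the paper: both verify the four axioms coordinate-wise from $f_{i}\in\mathcal{A}(\Gamma,\alpha,\nabla)$, using Lemma~\ref{lem:2-11} for axiom~(1), the defining relation together with Lemmas~\ref{lem:2-5} and~\ref{lem:2-11} for axiom~(3), the projection onto the first $n$ coordinates for axiom~(2), and the injectivity of the restriction map $\rho_{p}$ for axiom~(4). Your caveat about integral spanning in axiom~(4) is well taken---the paper simply asserts the existence of an $\ell\times\ell$ minor of determinant $\pm 1$ without justification, so your proposed remedy (choosing $f_{n+1},\ldots,f_{\ell}$ to extend $f_{1},\ldots,f_{n}$ to a primitive $\mathbb{Z}$-basis of a rank-$\ell$ subgroup) is, if anything, more careful than the original argument.
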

\begin{proof}
Because 
$\alpha=\oplus_{i=1}^{n}\alpha_{i}$ and $\widetilde{\alpha}=\alpha\oplus (\oplus_{i=n+1}^{\ell}\alpha_{i})$, it is enough to prove that $\widetilde{\alpha}$ satisfies the axiom of GKM graph under the same connection $\nabla$ of $(\Gamma,\alpha,\nabla)$.

We first claim that the axiom (1) of GKM graph holds for $\widetilde{\alpha}$, i.e., $\widetilde{\alpha}(e)=-\widetilde{\alpha}(\overline{e})$.
To do this, 
by definition of $\widetilde{\alpha}$, it is enought to show that 
$\alpha_{i}(e)=-\alpha_{i}(\overline{e})$ for all $i=1,\ldots,\ell$.
Let $e=e_{j,p}\in E_{p}(\Gamma)$ and $\overline{e}=e_{\sigma(j),q}\in E_{q}(\Gamma)$ for $j=1,\ldots,m$, where $i(e)=p$, $t(e)=q$ and $\sigma$ is the permutation on $\{1,\ldots,m\}$ induced from $\nabla_{e}:E_{q}(\Gamma)\to E_{p}(\Gamma)$.
Then, by definition of $\alpha_{i}$, we have that 
\begin{align*}
\alpha_{i}(e)=\alpha_{i}(e_{j,p})=k_{j,p}^{(i)}a_{i}
\end{align*}
and 
\begin{align*}
\alpha_{i}(\overline{e})=\alpha_{i}(e_{\sigma(j),q})=k_{\sigma(j),q}^{(i)}a_{i}
\end{align*}
where $f_{i}(p)_{e}=k_{j,p}^{(i)},\ f_{i}(q)_{\overline{e}}=k_{\sigma(j),q}^{(i)}\in \mathbb{Z}$.
Because $f_{i}\in \mathcal{A}(\Gamma,\alpha,\nabla)$, it follows from Lemma \ref{lem:2-12} that 
$k_{j,p}^{(i)}=-k_{\sigma(j),q}^{(i)}$.
This establish the axiom (1) of GKM graph.

We next claim the condition (4) of effectiveness, i.e., $\widetilde{\alpha}_{(p)}=\{\widetilde{\alpha}(e)\ |\ e\in E_{p}(\Gamma)\}$ spans $H^{2}(BT^{\ell})$ for all $p\in V(\Gamma)$.
Recall that for $E_{p}(\Gamma):=\{e_{1,p},\ldots,e_{m,p}\}$,
\begin{align*}
\widetilde{\alpha}(e_{j,p})=\oplus_{i=1}^{\ell}\alpha_{i}(e_{j,p})=\oplus_{i=1}^{\ell}k_{j,p}^{(i)}a_{i},
\end{align*}
where the integer $k_{j,p}^{(i)}$ is the $j$th coefficient of $f_{i}(p)\in \mathbb{Z}^{m}$ (see \eqref{axial-splits-coeff}).
Now $\{f_{1},\ldots,f_{\ell}\}$ spans an $\ell$-dimensional subspace of $\mathcal{A}(\Gamma,\alpha,\nabla)$.
Because the restriction map defined in \eqref{rest-map} is injective (by the similar arguments in the proof of Lemma~\ref{lem:3-2}), 
we have that the subset $\{f_{1}(p),\ldots,f_{\ell}(p)\}\subset \mathbb{Z}^{m}$ also spans a subgroup which is isomorphic to $\mathbb{Z}^{\ell}$ for all $p\in V(\Gamma)$.
This shows that the $(m\times \ell)$-matrix $(k_{j,p}^{(i)})_{i,j}$ has rank $\ell(\le m)$ 
and some minor (of $(\ell\times \ell)$-smaller square matrix in $(k_{j,p}^{(i)})_{i,j}$) with determinant $\pm 1$, for all $p\in V(\Gamma)$.
Therefore, there are $\ell$ elements in $\widetilde{\alpha}_{(p)}=\{\widetilde{\alpha}(e_{1,p}),\ldots, \widetilde{\alpha}(e_{m,p})\}$ 
which generate $H^{2}(BT^{\ell})$.
This establishes the condition (4).

We also check the axiom (2), i.e., $\widetilde{\alpha}_{(p)}$ is pairwise lineraly independent for all 
$p\in V(\Gamma)$.
Because $\alpha$ is an axial function, $\alpha_{(p)}$ is pairwise linearly independent for all $p\in V(\Gamma)$, i.e.,
$\alpha(e)$ and $\alpha(e')$ are linearly independent for all pairs $e, e'\in E_{p}(\Gamma)$.
Moreover, we may write 
\[
\widetilde{\alpha}(e)=\oplus_{i=1}^{\ell}\alpha_{i}(e)= \oplus_{i=1}^{n}\alpha_{i}(e)\oplus(\oplus_{i=n+1}^{\ell}\alpha_{i}(e))=\alpha(e)\oplus(\oplus_{i=n+1}^{\ell}\alpha_{i}(e)),
\]
and 
\[
\widetilde{\alpha}(e')=\oplus_{i=1}^{\ell}\alpha_{i}(e')= \oplus_{i=1}^{n}\alpha_{i}(e')\oplus(\oplus_{i=n+1}^{\ell}\alpha_{i}(e'))=\alpha(e')\oplus(\oplus_{i=n+1}^{\ell}\alpha_{i}(e')).
\]
Here, by definition of $\alpha_{i}$, the element $\alpha_{i}(e)$ (resp.\ $\alpha_{i}(e')$), for $i=n+1,\ldots, \ell$, is independent with 
$\alpha(e)$ (resp.\ $\alpha(e')$). 
Hence, we have that $\widetilde{\alpha}(e)$ and $\widetilde{\alpha}(e')$ are also pairwise linearly independent. 
This establishes the axiom (2).

Finally, we claim the axiom (3), i.e., $\widetilde{\alpha}$ satisfies the following congruence relation:
for each $e'\in E_{i(e)}(\Gamma)$ 
\[
\widetilde{\alpha}(\nabla_{e}(e'))=\widetilde{\alpha}(e')+c_{e}(e')\widetilde{\alpha}(e),
\]
where $c_{e}(e')$ is the integer which satisfies that 
\[
\alpha(\nabla_{e}(e'))=\alpha(e')+c_{e}(e')\alpha(e).
\]
Because $\widetilde{\alpha}=\oplus_{i=1}^{\ell}\alpha_{i}$, it is enough to prove that $\alpha_{i}$ satisfies the congruence relation:
\[
\alpha_{i}(\nabla_{e}(e'))=\alpha_{i}(e')+c_{e}(e')\alpha_{i}(e).
\]
Set $e=e_{j,p}$ and $e'=e_{h,p}$ for some $j,h=1,\ldots, m$, and $\nabla_{e}(e')=e_{\sigma(h),q}$.
By definition, $\alpha_{i}(e_{j,p})=k_{j,p}^{(i)}a_{i}$.
Therefore, it is enough to check the following relation:
\begin{align}
\label{rel-goal}
k_{\sigma(h),q}^{(i)}=
k_{h,p}^{(i)}+c_{e}(e')k_{j,p}^{(i)}.
\end{align}
Using $f_{i}\in \mathcal{A}(\Gamma,\alpha,\nabla)$, (i.e., E.q.~\eqref{def-rel} holds), Lemma~\ref{lem:2-6} and Lemma~\ref{lem:2-12},
we have 
\begin{align}
\label{relation-prove-axiom3}
N_{e}(f_{i}(p))-f_{i}(q)=-f_{i}(p)_{e} N_{e}(c_{(\Gamma,\alpha,\nabla)}(e))
\end{align}
for all $i=1,\ldots, \ell$.
Since $e=e_{j,p}$, we have $f_{i}(p)_{e}=k_{j,p}^{(i)}$ (see \eqref{axial-splits-coeff}).
Therefore,   
E.q.~\eqref{relation-prove-axiom3} implies that  
\begin{align*}
\begin{pmatrix}
k_{\sigma^{-1}(1),p}^{(i)} \\
\vdots \\
k_{\sigma^{-1}(m),p}^{(i)}
\end{pmatrix}
-
\begin{pmatrix}
k_{1,q}^{(i)} \\
\vdots \\
k_{m,q}^{(i)}
\end{pmatrix}
=-k_{j,p}^{(i)}
\begin{pmatrix}
c_{e}(e_{\sigma^{-1}(1),p}) \\
\vdots \\
c_{e}(e_{\sigma^{-1}(m),p})
\end{pmatrix}.
\end{align*}
This shows that 
\begin{align*}
k_{h,p}^{(i)}-k_{\sigma(h),q}^{(i)}=-k_{j,p}^{(i)}c_{e}(e_{h,p}).
\end{align*}
Since $e'=e_{h,p}$, 
this equation establishes the equation \eqref{rel-goal}.

Consequently, $\widetilde{\alpha}$ is an extended axial function of $\alpha$.
\end{proof}

This establishes $(2)\Rightarrow (1)$ in Theorem~\ref{thm-main}.
Together with Section~\ref{sect:3.1}, we obtain Theorem \ref{thm-main}.

\begin{remark}
\label{reason-name}
Lemma~\ref{lem:3-4} tells us that 
from an element of $\mathcal{A}(\Gamma,\alpha,\nabla)$ 
we can construct an extension of $(\Gamma,\alpha,\nabla)$.
In fact, by the similar arguments, we see that  
$\mathcal{A}(\Gamma,\alpha,\nabla)$ contains every extension of $(\Gamma,\alpha,\nabla)$, 
i.e., every extension of $(\Gamma,\alpha,\nabla)$ corresponds to an element of $\mathcal{A}(\Gamma,\alpha,\nabla)$.
Furthermore, it is not so difficult to show that every axial function on $\Gamma$ whose connection is $\nabla$ can be constructed by an element of $\mathcal{A}(\Gamma,\alpha,\nabla)$.
This is the reason why we call $\mathcal{A}(\Gamma,\alpha,\nabla)$ a {\it group of axial functions}. 
\end{remark}

As a corollary, we have the following fact.
\begin{corollary}
\label{cor:3-6}
Let $(\Gamma,\alpha,\nabla)$ be an $(m,n)$-type GKM graph.
If one of the following cases hold, then there are no extensions of $(\Gamma,\alpha,\nabla)$:
\begin{enumerate}
\item $m=n$;
\item ${\rm rk}\ \mathcal{A}(\Gamma,\alpha,\nabla)=n$.
\end{enumerate}
\end{corollary}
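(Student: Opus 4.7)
The plan is to derive the corollary directly from the main theorem (Theorem~\ref{thm-main}) together with Lemma~\ref{lem:3-2}, so essentially no new work is required; the content is just to spell out what extension means and why each hypothesis forces the rank inequality to be saturated.

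First I would recall that an extension of $(\Gamma,\alpha,\nabla)$ means an $(m,\ell)$-type GKM graph $(\Gamma,\widetilde{\alpha},\nabla)$ with $n < \ell \le m$; so ruling out extensions amounts to ruling out the existence of such an $\ell$. By the implication $(1)\Rightarrow(2)$ of Theorem~\ref{thm-main}, the existence of an extension of type $(m,\ell)$ forces $\ell \le {\rm rk}\ \mathcal{A}(\Gamma,\alpha,\nabla)$; equivalently, no extension exists once ${\rm rk}\ \mathcal{A}(\Gamma,\alpha,\nabla) \le n$.

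For case (1), where $m=n$, the valency condition combined with Lemma~\ref{lem:3-2} gives
\[
n \le {\rm rk}\ \mathcal{A}(\Gamma,\alpha,\nabla) \le m = n,
\]
so ${\rm rk}\ \mathcal{A}(\Gamma,\alpha,\nabla) = n$. Thus by the remark above, no $(m,\ell)$-type extension with $\ell > n$ can exist; alternatively, one sees directly that the definition of extension already requires $\ell \le m = n$, leaving no room. For case (2), the hypothesis ${\rm rk}\ \mathcal{A}(\Gamma,\alpha,\nabla) = n$ is precisely what Theorem~\ref{thm-main} rules incompatible with any extension to a type $(m,\ell)$ graph with $\ell > n$.

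There is no substantive obstacle: both parts are one-line consequences of the equivalence already established. The only thing to be careful about is the reading of condition (2) as ${\rm rk}\ \mathcal{A}(\Gamma,\alpha,\nabla) = n$ (which is clearly the intended statement, since $\mathcal{A}(\Gamma,\alpha,\nabla)$ itself is an abelian group, not an integer), and noting that case (1) is in fact subsumed by case (2) via Lemma~\ref{lem:3-2}, so one could even present the corollary as a single statement with (1) appearing as a special case.
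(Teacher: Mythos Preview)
Your proposal is correct and matches the paper's approach: the paper states the corollary without proof, treating it as an immediate consequence of Theorem~\ref{thm-main} (together with Lemma~\ref{lem:3-2} for case~(1)), which is exactly what you spell out. Your observation that case~(1) is subsumed by case~(2) via Lemma~\ref{lem:3-2} is also a fair remark.
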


\begin{example}
\label{exam:3-7}
By Corollary~\ref{cor:3-6} and the computation in Example~\ref{eg-O}, the GKM graphs in Figure~\ref{examples} have no extensions, i.e., they are the maximal GKM graphs.
\end{example}

\section{Applications to geometry}
\label{sect:4}

Guillemin-Zara study GKM graph as a combinatorial counter part of the {\it GKM manifold}, and 
they build a bridge between the geometry of (in particular, symplectic) GKM manifolds and the combinatorics of GKM graphs.
In this and next sections, we give a new application of GKM graphs to study the geometry of GKM manifolds.
More precisely, we apply our main result to study the maximal torus actions of GKM manifolds.

\subsection{GKM manifold and its GKM graph}
\label{sect:4.1}

We first briefely recall the relation between GKM manifolds and GKM graphs (see \cite{GuZa} for details).
Let $M$ be a $2m$-dimensional, compact, connected smooth manifold with an effective $n$-dimensional torus $T^{n}$-action, 
where $1\le n\le m$.
We often denote such manifold as 
$(M, T)$ or $(M, T,\varphi)$ if we emphasize the action $\varphi:T\times M\to M$.
Put $M_{1}\subset M$ by the set of elements $x\in M$
such that the orbit $T(x)=\{x\}$ (a fixed point) or $T(x)\simeq S^{1}$, i.e., 
\[
M_{1}=\{x\in M\ |\ \dim T(x)\le 1\}.
\]
The set $M_{1}$ is called a {\it one-skeleton} of $(M,T)$.

A $2m$-dimensional manifold with an $n$-dimensional torus action $(M,T)$ is said to be a {\it GKM manifold} or an {\it $(m,n)$-type GKM manifold} if the following three conditions hold: 
\begin{enumerate}
\item $M^{T}\not=\emptyset$;
\item the manifold $M$ has a $T$-invariant almost complex sturcture; 
\item the one-skeleton of $M$ has the structure of an abstract (connected) graph $\Gamma_{M}$ such that 
its vertices $V(\Gamma_{M})$ are the fixed points and its edges $E(\Gamma_{M})$ are embedded $2$-spheres connecting two fixed points.
\end{enumerate}

\begin{remark}
The third condition implies that the orbit space of the one-skeleton is one-dimensional.
Therefore, by definition of the $(m,n)$-type GKM manifold $M$, 
if $\dim T(=n)=1$ then $M$ is equivariantly diffeomorphic to $\C P^{1}$ with a non-trivial $S^{1}$-action.
So, in this paper, we often assume $2\le n\le m$ for an $(m,n)$-type GKM manifold.
%
\end{remark}

By using the differentialble slice theorem, it is easy to check that $\Gamma_{M}$ is an $m$-valent graph.
An axial function $\alpha_{M}:E(\Gamma_{M})\to H^{2}(BT)$ can be defined as the following way.
Now the cohomology $H^{2}(BT)\simeq \mathbb{Z}^{n}$ may be regarded as 
the set of all complex one-dimensional representations, or equivalently 
the set of all homomorphisms from $T^{n}$ to $S^{1}$, say ${\rm Hom}(T^{n}, S^{1})$. 
Because there is a $T$-invariant complex structure on $M$,
its tangent space $T_{p}M$ is a complex $T$-representation space, 
called a {\it tangential representaion} of $M$ on $p\in M^{T}$.
Therefore, $T_{p}M$ decomposes into irreducible complex representation spaces: 
\[
T_{p}M=\oplus_{i=1}^{m}V(a_{i})
\]
where $a_{i}\in {\rm Hom}(T^{n},S^{1})$ and 
$V(a_{i})$ represents the complex one-dimensional representation space with the weight  
$a_{i}\in {\rm Hom}(T,S^{1})$.
Since the $T$-action is effective, $\{a_{1},\ldots,a_{m}\}$ spans ${\rm Hom}(T^{n},S^{1})\simeq \mathbb{Z}^{n}$.
Moreover, as is well-known,  
the third condition of the definition of GKM manifold is equivalent to that $\{a_{1},\ldots,a_{m}\}$ is pairwise linearly independent and the one-skeleton of $(M,T)$ is connected.
Then, each $V(a_{i})$ may be regarded as the tangential representation of some $T$-invariant $2$-sphere on $p\in M^{T}$, say $S^{2}_{i,p}\subset M$. 
Recall that $E(\Gamma_{M})$ is the set of $T$-invariant $2$-spheres; 
therefore, there is the corresponding edge $e_{i,p}\in E_{p}(\Gamma_{M})$ with $S^{2}_{i,p}$.
This defines the function such that $e_{i,p}\mapsto a_{i}$, and 
we denote this function as $\alpha_{M}:E(\Gamma_{M})\to H^{2}(BT)$.
We next check that the function $\alpha_{M}$ satisfies the axiom of the axial function.
The 1st axiom, i.e., $\alpha_{M}(e)=-\alpha_{M}(\bar{e})$, can be easily checked from  
the fact that the invariant $2$-sphere is isomorphic to the standard $S^{1}$-action on $\mathbb{C} P^{1}(\simeq S^{2})$.
The 2nd axiom and the 4th condition, i.e., the subset $\{\alpha_{M}(e)\ |\ E_{p}(\Gamma_{M})\}$ in $H^{2}(BT)$ for each $p\in V(\Gamma_{M})$ satisfies the pairwise linearly independent and spans $H^{2}(BT)$, 
have already checked in the arguments as above.
In order to check the 3rd axiom, we need to define the connection on the labeled graph $(\Gamma_{M},\alpha_{M})$.
Denote an invariant two sphere which connecting two fixed points $p,q\in M^{T}$ as $S^{2}(pq)\simeq \mathbb{C} P^{1}$ (this might not be unique).
Then, by using the restricted $T^{n}$-invariant almost complex structure on the restricted tangent bundle $\tau_{M}|_{S^{2}(pq)}$,
the restricted tangent bundle $\tau_{M}|_{S^{2}(pq)}$ splits into the following $T^{n}$-invariant line bundles: 
\[
\mathbb{L}_{1}\oplus \cdots \oplus \mathbb{L}_{m},
\]
where $\mathbb{L}_{i}$ is a complex $T^{n}$-equivariant line bundle over $S^{2}(pq)$ for all $i=1,\ldots,m$.
Because each $\mathbb{L}_{i}$ is $T^{n}$-equivariant, we may write the restrictions onto fixed points as $\mathbb{L}_{i}|_{p}\simeq V(a_{i,p})$ and $\mathbb{L}_{i}|_{q}\simeq V(a_{i,q})$
for some $\alpha_{M}(e_{i,p})=a_{i,p}$ and $\alpha_{M}(e_{i,q})=a_{i,q}$, for each $i=1,\ldots, m$.
This defines the map $e_{i,p}\mapsto e_{i,q}$ for all $i=1,\ldots,m$. 
Therefore, there is the bijection $\nabla_{pq}:E_{p}(\Gamma_{M})\to E_{q}(\Gamma_{M})$.
The collection of the bijections on each $e\in E(\Gamma_{M})$ define the collection $\nabla_{M}=\{\nabla_{e}\ |\ e\in E(\Gamma_{M}) \}$.
It is easy to check that this satisfies $\nabla_{e}(e)=\overline{e}$ and $\nabla_{\overline{e}}=\nabla_{e}^{-1}$.
Now it is well-known that for every complex line bundle $\mathbb{L}$ over $\mathbb{C} P^{1}$ there exists an $S^{1}$ representation space
$\mathbb{C}_{r}$ by $r\in {\rm  Hom}(S^{1},S^{1})\simeq \mathbb{Z}$ such that 
\begin{eqnarray}
\label{line-bdl-gen}
\mathbb{L}\equiv\mathbb{L}_{\rho}:=S^{3}\times_{S^{1}}\mathbb{C}_{r},
\end{eqnarray}
where $S^{1}$ acts on $S^{3}\subset \mathbb{C}^{2}$ diagonally and $\mathbb{C}_{r}$ via $r$.
Therefore, it easily follows from \eqref{line-bdl-gen} that $\nabla_{M}$ satisfies the congruence relation \eqref{cong-rel}.
Hence, $\nabla_{M}$ is the connection on $(\Gamma_{M},\alpha_{M})$, and $\alpha_{M}$ is the axial function on $\Gamma_{M}$.

Consequently, 
the GKM manifold $M$ defines the GKM graph $(\Gamma_{M},\alpha_{M},\nabla_{M})$  
by using its one-skeleton and the tangential representations.
In this paper, such GKM graph $(\Gamma_{M},\alpha_{M})$ (or $(\Gamma_{M},\alpha_{M},\nabla_{M})$) is called an {\it induced GKM graph} from $M$.

\begin{example}
\label{ex:4.2}
In Figure \ref{examples}, the left GKM graph is the GKM graph induced from the standard $T^{2}$-action on $\mathbb{C} P^{2}$ and the 
right one is that induced from $T^{3}$-action on $\mathbb{C} P^{3}$.
The middle GKM graph is induced from the $T^{2}$-action on $S^{6}=G_{2}/SU(3)$, 
where $G_{2}$ is the exceptional Lie group, see \cite[Section 5.2]{GHZ}.
\end{example}

\subsection{Extensions of torus actions}
\label{sect:4.2}

The definition of an extension of GKM graphs in Section \ref{sect:2.1} is motivated 
by an extension of a torus action on GKM manifold.
We explain it more preceisely in this section.

Let $(M,T^{n},\varphi)$ be a manifold with an effective $n$-dimensional torus action $\varphi:T^{n}\times M\to M$ (not necessarily a GKM manifold). 
If there exists an effective $\ell$-dimensional torus action $(M,T^{\ell}, \varphi')$ (for $n<\ell$) and 
an injective homomorphism $\iota:T^{n}\to T^{\ell}$ such that the following diagram commutes:
\begin{align*}
\xymatrix{
T^{\ell}\times M \ar[rd]^{\varphi'} &    \\
T^{n}\times M \ar[u]^{\iota \times id} \ar[r]^{\varphi} & M  
}
\end{align*} 
then $(M,T^{\ell}, \varphi')$ is called an {\it extension} of $(M,T^{n},\varphi)$.
We prove the following fact:
\begin{proposition}
\label{ext-cat-in-GKM}
If $(M^{2m},T^{\ell})$ is an extension of an $(m,n)$-type GKM manifold $(M^{2m},T^{n})$ (for $n<\ell\le m$) and 
the $T^{\ell}$-action preserves the almost complex structure of $M$, 
then 
$(M^{2m},T^{\ell})$ is an $(m,\ell)$-type GKM manifold.

Furthermore, the induced $(m,\ell)$-type GKM graph 
$(\widetilde{\Gamma}_{M},\widetilde{\alpha}_{M},\widetilde{\nabla}_{M})$ from $(M,T^{\ell})$ is an extension of the induced $(m,n)$-type GKM graph
$(\Gamma_{M},\alpha_{M},\nabla_{M})$ from $(M,T^{n})$.
\end{proposition}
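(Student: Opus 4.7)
The plan is to verify that $(M^{2m},T^{\ell})$ satisfies the three defining conditions of an $(m,\ell)$-type GKM manifold and then to match up the combinatorial data assigned to it. Throughout, write $\iota:T^{n}\hookrightarrow T^{\ell}$ for the inclusion supplied by the extension and $\pi=\iota^{*}:H^{2}(BT^{\ell})\to H^{2}(BT^{n})$ for the induced projection on characters.

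First I would observe that the fixed point sets agree: $M^{T^{\ell}}\subset M^{T^{n}}$ is clear, and conversely the connected torus $T^{\ell}$ must act trivially on the finite set $M^{T^{n}}$, so $M^{T^{\ell}}=M^{T^{n}}\ne\emptyset$. At each $p\in M^{T^{\ell}}$ the complex $T^{\ell}$-representation $T_{p}M$ decomposes as $\bigoplus_{i=1}^{m}V(\widetilde a_{i})$, and restriction along $\iota$ gives the $T^{n}$-decomposition $\bigoplus_{i=1}^{m}V(a_{i})$ with $a_{i}=\pi(\widetilde a_{i})$. Since pairwise linear dependence of the $\widetilde a_{i}$ would descend to pairwise linear dependence of the $a_{i}$, the $\widetilde a_{i}$ are automatically pairwise linearly independent; effectiveness of the $T^{\ell}$-action then guarantees that they span $H^{2}(BT^{\ell})$.

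Next I would identify the one-skeletons, $M_{1}^{\ell}=M_{1}^{n}$, which gives the graph structure and shows that the underlying graph of the induced GKM graph from $(M,T^{\ell})$ equals $\Gamma_{M}$. The inclusion $M_{1}^{\ell}\subset M_{1}^{n}$ is automatic. For the reverse, I would argue that every invariant two-sphere $S^{2}_{i,p}$ is $T^{\ell}$-invariant: any $g\in T^{\ell}$ fixes $p$ and preserves the one-dimensional weight space $V(a_{i})=V(\widetilde a_{i})\subset T_{p}M$, so by the equivariant slice theorem $g$ sends a neighbourhood of $p$ in $S^{2}_{i,p}$ into $S^{2}_{i,p}$, and connectedness forces $g(S^{2}_{i,p})=S^{2}_{i,p}$. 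The restricted $T^{\ell}$-action on $S^{2}_{i,p}\simeq\C P^{1}$ must then factor through a circle, so its generic orbit is one-dimensional and $S^{2}_{i,p}\subset M_{1}^{\ell}$.

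To conclude, I would compare the combinatorial data. By construction $\widetilde\alpha_{M}(e_{i,p})=\widetilde a_{i}$ and $\alpha_{M}(e_{i,p})=a_{i}=\pi(\widetilde a_{i})$, so $\pi\circ\widetilde\alpha_{M}=\alpha_{M}$. The connection $\nabla_{M}$ is read off from the weight bundle decomposition of $\tau_{M}|_{S^{2}(pq)}$, which is determined by the weight decompositions at the endpoints $p$ and $q$; since these coincide with the $T^{\ell}$-weight decompositions, the $T^{\ell}$-equivariant splitting is the same collection of line bundles (only the equivariant structures are richer), and hence $\widetilde\nabla_{M}=\nabla_{M}$. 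The main obstacle is the global $T^{\ell}$-invariance of each invariant two-sphere $S^{2}_{i,p}$: once this is established, the remaining verifications are essentially local and reduce to the observation that the $T^{n}$- and $T^{\ell}$-weight space decompositions at each fixed point agree as decompositions of $T_{p}M$.
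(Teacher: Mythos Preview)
Your proof is correct and follows the same architecture as the paper's: establish $M^{T^{\ell}}=M^{T^{n}}$, identify the two one-skeletons, and then compare axial functions and connections through $\pi=\iota^{*}$. The one substantive difference is your argument for the fixed-point equality. You observe that $M^{T^{n}}$ is a finite $T^{\ell}$-invariant set (invariance because $T^{\ell}$ is abelian, finiteness because GKM fixed points are isolated), so the connected torus $T^{\ell}$ must act on it trivially. The paper instead argues by contradiction through the tangential representation: if some $p\in M^{T^{n}}$ had $T^{\ell}(p)\neq\{p\}$, then $T^{n}\subset T^{\ell}_{p}$ would act trivially on the nonzero subspace $T_{p}T^{\ell}(p)\subset T_{p}M$, contradicting the fact that all weights $a_{i}$ at $p$ are nontrivial. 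Your route is shorter and uses only the isolation of fixed points; the paper's route stays closer to the weight-space viewpoint that governs the rest of the argument. For the one-skeleton and connection comparisons your treatment is, if anything, more explicit than the paper's, which simply asserts the existence of an extended weight $\rho:T^{\ell}\to S^{1}$ on each invariant sphere without spelling out the slice argument.
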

\begin{proof}
Because the $T^{\ell}$-action preserves the almost complex structure of $M$, 
it is enough to check that its one-skeleton has the structure of a graph.
We note that for all $p\in M$ two orbits of $p$ of these actions satisfy $T^{n}(p)\subset T^{\ell}(p)$,
because the $T^{\ell}$-action is an extension of $T^{n}$-action.

We first claim that $M^{T^{n}}=M^{T^{\ell}}$.
Because $T^{n}(p)\subset T^{\ell}(p)$ for all $p\in M$,
we have that $M^{T^{n}}\supset M^{T^{\ell}}$.
Assume that there exists a fixed point $p\in M^{T^{n}}$ such that $T^{\ell}(p)\not=\{p\}$.
As is well-known, there is a decomposition $T_{p}M=T_{p}T^{\ell}(p)\oplus N_{p}T^{\ell}(p)$, where $T_{p}T^{\ell}(p)(\not=\{0\})$ is the tangent space and $N_{p}T^{\ell}(p)$ is the normal space of $T^{\ell}(p)$ on $p$.
By using the differentiable slice theorem, the isotropy subgroup $T^{\ell}_{p}$ (of the $T^{\ell}$-action on $p$) acts on $T_{p}T^{\ell}(p)$ trivially.
This shows that the $T^{n}(\subset T^{\ell}_{p})$ also acts on $T_{p}T^{\ell}(p)$ trivially.
However, by the definition of GKM manifolds, for the restricted action $(T_{p}M,T^{n})$, 
there is another decomposition $T_{p}M=\oplus_{i=1}^{m}V(a_{i})$
such that each representation $a_{i}:T^{n}\to S^{1}$ is non-trivial.
This contradicts to that $T^{n}$ acts on $(\{0\}\not=)T_{p}T^{\ell}(p)\subset T_{p}M$ trivially.
Hence, $M^{T^{n}}=M^{T^{\ell}}$.

Take $p\in M$ such that $T^{n}(p)\simeq S^{1}$.
Because we assume that the one-skeleton of $(M,T^{n})$ has the structure of a connected graph,
we have that $p$ is an element in an invariant $2$-sphere $S^{2}$ of $(M,T^{n})$.
Because $M^{T^{n}}=M^{T^{\ell}}$, 
by considering the tangential representation around fixed points on this $T^{n}$-invariant $S^{2}(\ni p)$,
there exists a representation $\rho:T^{\ell}\to S^{1}$ which may be regarded as the extension of the $T^{n}$-action on this $S^{2}$.
Therefore, every $T^{n}$-invariant $S^{2}$ is also a $T^{\ell}$-invariant $S^{2}$, i.e., 
if $T^{n}(p)\simeq S^{1}$ then $T^{\ell}(p)\simeq S^{1}$.
Together with $T^{n}(p)\subset T^{\ell}(p)$ for all $p\in M$,
this implies that 
two one-skeletons of $(M^{2m},T^{n})$ and $(M^{2m},T^{\ell})$ are the same.

We next prove the final statement.
By the arguments as above, we have $\widetilde{\Gamma}_{M}=\Gamma_{M}$.
Moreover, because the extended  $T^{\ell}$-action preserves the $T^{n}$-invariant almost complex structure, 
the splitting $\oplus_{i=1}^{m}\LL_{i}$, of the restriction of the tangent bundle to such $S^{2}$, by $T^{n}$-action is also preserved by the extended $T^{\ell}$-action.
This implies that two connections on induced GKM graphs $\nabla_{M}$ from the $T^{n}$-action and $\widetilde{\nabla}_{M}$ from the extended $T^{\ell}$-action are the same.
Finally, put the induced homomorphism from the inclusion $\iota:T^{n}\to T^{\ell}$ as $\pi:H^{2}(BT^{\ell})\to H^{2}(BT^{n})$.
Then, by considering the tangential representations (of both $T^{n}$ and $T^{\ell}$-actions) around fixed points, 
it is easy to check that there is the following commutative diagram:
\begin{align*}
\xymatrix{
 & H^{2}(BT^{\ell})\ar[d]^{\pi}    \\
E(\Gamma_{M}) \ar[ru]^{\widetilde{\alpha}_{M}} \ar[r]^{\alpha_{M}} & H^{2}(BT^{n})  
}
\end{align*} 
This establishes the final statement.
\end{proof}

Therefore, by using Theorem~\ref{thm-main} (or Theorem~\ref{main:1}) and Proposition \ref{ext-cat-in-GKM}, we have Corollary~\ref{main:2}.

\subsection{Maximal torus action on $S^{6}=G_{2}/SU(3)$}
\label{sect:4.3}

As we mentioned in Example~\ref{ex:4.2}, the $(3,2)$-type GKM graph in Figure~\ref{examples} is the induced GKM graph of the GKM manifold $(G_{2}/SU(3),T^{2})$,
where $T^{2}$ acts on $G_{2}$ as its maximal torus subgroup (e.g. see~\cite{GHZ}).
We also note that $G_{2}/SU(3)\simeq S^{6}$ (diffeomorphic).
Therefore, by using Corollary~\ref{main:2} and Example~\ref{eg-O}, the following well-known fact can be proved (see also~\cite{BT}):
\begin{corollary}
\label{ext-S^6}
The $T^{2}$-action on $G_{2}/SU(3)\simeq S^{6}$ is the maximal torus action.
In other words, there are no extended $T^{3}$-actions on $S^{6}$ of this $T^{2}$-action, 
which preserves the almost complex structure induced from the homogeneous space $G_{2}/SU(3)$.
\end{corollary}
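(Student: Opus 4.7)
The plan is to apply the second part of Corollary~\ref{main:2}, which says that if the induced GKM graph satisfies $\mathrm{rk}\,\mathcal{A}(\Gamma_M,\alpha_M,\nabla_M) = n$, then the given $T^n$-action on $M$ is already the maximal effective torus action preserving the invariant almost complex structure. So the task reduces to identifying the induced GKM graph of $(S^6 = G_2/SU(3), T^2)$ and reading off the rank of its group of axial functions.

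First, I would invoke Example~\ref{ex:4.2}, where it is observed that the induced GKM graph of the $T^2$-action on $S^6 = G_2/SU(3)$ coming from the maximal torus of $G_2$ is precisely the middle $(3,2)$-type GKM graph displayed in Figure~\ref{examples}. This is the same $(3,2)$-type GKM graph appearing in Example~\ref{exam:2-6}, where its invariant function $c_{(\Gamma,\alpha,\nabla)}$ is computed explicitly.

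Second, I would cite Example~\ref{eg-O}, which carries out the computation of $\mathcal{A}(\Gamma,\alpha,\nabla)$ for precisely this $(3,2)$-type GKM graph and shows
\[
\mathcal{A}(\Gamma_M,\alpha_M,\nabla_M) = \{((x,y,z),(-x,-y,-z)) \mid x+y+z = 0\} \simeq \mathbb{Z}^2.
\]
In particular $\mathrm{rk}\,\mathcal{A}(\Gamma_M,\alpha_M,\nabla_M) = 2$, which coincides with the dimension $n=2$ of the acting torus. Applying Corollary~\ref{main:2} then immediately yields the claim: the $T^2$-action on $S^6$ cannot be extended to any effective $T^3$-action preserving the almost complex structure induced from $G_2/SU(3)$.

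Since all of the real work has already been done in Examples~\ref{exam:2-6} and~\ref{eg-O}, there is essentially no obstacle beyond bookkeeping. The only subtle point worth making explicit is that the almost complex structure constraint is built into our framework: Proposition~\ref{ext-cat-in-GKM} ensures that any almost-complex-preserving extension of the $T^2$-action produces an extension of the associated GKM graph, so ruling out graph extensions via Theorem~\ref{main:1} genuinely rules out geometric extensions in the category we care about.
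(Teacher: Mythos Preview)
Your proposal is correct and follows essentially the same approach as the paper: the paper's proof is a single sentence invoking Corollary~\ref{main:2} together with the computation in Example~\ref{eg-O}, and you have spelled out exactly that reasoning (with the additional helpful remark about Proposition~\ref{ext-cat-in-GKM}).
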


\begin{remark}
Note that there is the $T^{3}$-action on $S^{6}\subset \mathbb{C}^{3}\oplus \mathbb{R}$ defined by the standard $T^{3}$-action on $\mathbb{C}^{3}$ 
(see e.g.~\cite{Ku15}).
However, from Corollary~\ref{ext-S^6}, this action is not the extended action of the $T^{2}$-action on $S^{6}=G_{2}/SU(3)$.

\end{remark}

In the next section, we shall apply our results for more complicated GKM manifolds.

\section{Maximal torus action on the complex Grassmannian $G_{2}(\mathbb{C}^{n+2})$}
\label{sect:5}

The {\it (complex) Grassmannian} (of $2$-planes in $\mathbb{C}^{n+2}$), denoted by $G_{2}(\mathbb{C}^{n+2})$, is defined by the set of all complex $2$-dimensional vector spaces in $\mathbb{C}^{n+2}$.
Namely,
\begin{align}
\label{Grass}
G_{2}(\mathbb{C}^{n+2}):=\{V\subset \mathbb{C}^{n+2}\ |\ \dim_{\mathbb{C}} V=2\}.
\end{align}
The Grassmannian $G_{2}(\mathbb{C}^{n+2})$ has the natural transitive $SU(n+2)$-action which is 
induced from the standard $SU(n+2)$-action on $\mathbb{C}^{n+2}$.
Since its isotropy group is $S(U(2)\times U(n))$, $G_{2}(\mathbb{C}^{n+2})$ is diffeomorphic to the homogeneous space $SU(n+2)/S(U(2)\times U(n))$ (also see \cite{Ku10}).
In particular, this shows that 
\begin{align*}
\dim G_{2}(\mathbb{C}^{n+2})=\dim SU(n+2)/S(U(2)\times U(n))=4n.
\end{align*}
Since a maximal torus of $SU(n+2)$ is isomorphic to $T^{n+1}$, 
there is a restricted $T^{n+1}$-action on $G_{2}(\mathbb{C}^{n+2})$ and its one-skeleton has the structure of a graph (see \cite{GHZ}).
We denote this action as $(SU(n+2)/S(U(2)\times U(n)),T^{n+1})$.
Note that the action $(SU(n+2)/S(U(2)\times U(n)),T^{n+1})$ is not effective because there is the non-trivial center in $SU(n+2)$ (isomorphic to $\mathbb{Z}/(n+2)\mathbb{Z}$); therefore, the GKM graph obtained from this action does not satisfy the condition (4) in Section~\ref{sect:2}, 
i.e., the axial function is not an effective axial function.
So, in this paper, 
we define the $T^{n+1}$-action on $G_{2}(\mathbb{C}^{n+2})$ by the induced action from  
the standarad $T^{n+1}$-action on the first $(n+1)$-coordinates in $\mathbb{C}^{n+2}$ (see \eqref{Grass}). 
We denote this action as $(G_{2}(\mathbb{C}^{n+2}),T^{n+1})$.
It is easy to check that $(G_{2}(\mathbb{C}^{n+2}),T^{n+1})$ is effective and preserves the complex structure 
of $G_{2}(\mathbb{C}^{n+2})$ induced from that of $\mathbb{C}^{n+2}$. 
For example, when $n=1$, $(G_{2}(\mathbb{C}^{3}),T^{2})$ is equivariantly diffeomorphic to the complex projective space $\mathbb{C} P^2$ with the standard $T^2$-action, i.e., the toric manifold.
Note that, for $n\ge 2$, $(G_{2}(\mathbb{C}^{n+2}),T^{n+1})$ is not a toric manifold.
We also have that   
$(G_{2}(\mathbb{C}^{n+2}),T^{n+1})$ is {\it essentially isomorphic} to $(SU(n+2)/S(U(2)\times U(n)),T^{n+1})$, i.e., 
the induced effective action from $(SU(n+2)/S(U(2)\times U(n)),T^{n+1})$ is $T^{n+1}$-equivariantly diffeomorphic to $(G_{2}(\mathbb{C}^{n+2}),T^{n+1})$ up to automorphism of $T^{n+1}$ (see \cite[Definition 2.6]{Ku10} for details).
This implies that $(G_{2}(\mathbb{C}^{n+2}),T^{n+1})$ is a GKM manifold defined in Section \ref{sect:4.1} and its GKM graph satisfies the effectiveness condition (4).
\begin{remark}
GKM graphs obtained from the non-effective torus actions for flag manifolds are studied by Tymoczko in~\cite{Ty} or Fukukawa-Ishida-Masuda in~\cite{FIM} etc.
\end{remark}

In the next subsection, 
we compute the GKM graph of $(G_{2}(\mathbb{C}^{n+2}),T^{n+1})$.
For simplicity, we put 
\begin{align*}
M_{n}=G_{2}(\mathbb{C}^{n+2})
\end{align*} 
from the next subsection.

\subsection{The GKM graph of $(G_{2}(\mathbb{C}^{n+2}),T^{n+1})$}
\label{sect:5.1}

Let $(\Gamma_{n},\alpha_{n},\nabla_{n})$ be the induced GKM graph from $(M_{n},T^{n+1})$.
Note that $\Gamma_{n}=(V(\Gamma_{n}),E(\Gamma_{n}))$ is a $2n$-valent graph, because the real dimension of $M_{n}$ is $4n$, where $n\ge 1$.

We first consider the fixed points of $(M_{n},T^{n+1})$.
By definition, the Grassmannian $M_{n}$ may be identified with the following set:
\begin{align*}
\{[v_{1},v_{2}]\ |\ v_{1}, v_{2}\ \text{are linearly independent in}\ \mathbb{C}^{n+2} \},
\end{align*}
where the symbol $[v_{1},v_{2}]$ represents the equivalence class such that $[v_{1},v_{2}]$ is identified with $[w_{1},w_{2}]$ 
if two pairs of vectors $\{v_{1},v_{2}\}$ and $\{w_{1},w_{2}\}$ span the same $2$-dimensional complex vector space in $\mathbb{C}^{n+2}$.
Under this identification, the element $t\in T^{n+1}$ acts on $[v_{1},v_{2}]\in M_{n}$ by 
\begin{align*}
t\cdot [v_{1},v_{2}]\mapsto [tv_{1},tv_{2}],
\end{align*} 
where $t\in T^{n+1}$ acts on $v\in \mathbb{C}^{n+2}$ by the standard coordinatewise multiplication on the first $(n+1)$-coordinates.
Then, the fixed points can be denoted by  
\begin{align*}
M^{T}_{n}:=\{[e_{i},e_{j}]\ |\ i\not=j,\ i,j=1,\ldots, n+2\},
\end{align*} 
where $e_{1},\ldots,e_{n+2}$ are the standard basis in $\mathbb{C}^{n+2}$.
By identifying the element $[e_{i},e_{j}]\in M^{T}_{n}$ as the subset $\{i,j\}$ in $[n+2]:=\{1,2,\ldots,n+2\}$, 
we may regard the set of vertices $V(\Gamma_{n})$ as 
\begin{align*}
V(\Gamma_{n})=\{\{i,j\}\subset [n+2]\ |\ i\not=j \}.
\end{align*}
We also have that
\begin{align*}
|V(\Gamma_{n})|=
{n+2 \choose 2}
=\frac{(n+2)(n+1)}{2}.
\end{align*}

We next consider the invariant $2$-spheres in $(M_{n},T^{n+1})$.
Fix $\{i,j\}\subset [n+2]$.
Now the following subsets are $T^{n+1}$-invariant sets in $M_{n}$ which contain $[e_{i},e_{j}]$:
\begin{align*}
& S_{i,j}^{i,k}=\{ [e_{i}, v_{jk}]\in M_{n}\ |\ v_{jk}=a_{j}e_{j}+a_{k}e_{k},\ (a_{j},a_{k})\in \mathbb{C}^{2}\setminus \{0\}\}; \\
& S_{i,j}^{j,k}=\{ [v_{ik}, e_{j}]\in M_{n}\ |\ v_{ik}=a_{i}e_{i}+a_{k}e_{k},\ (a_{i},a_{k})\in \mathbb{C}^{2}\setminus \{0\}\},
\end{align*}
for all $k\in [n+2]\setminus \{i,j\}$.

Because $[e_{i},v_{jk}]$ and $[e_{i},\lambda v_{jk}]$ (for all $\lambda\in \mathbb{C}^{*}$) are the same element in $M_{n}$, we have that 
$S_{i,j}^{i,k}$ is diffeomorphic to $\mathbb{C} P^{1}$.
Similarly, $S_{i,j}^{j,k}$ is also diffeomorphic to $\mathbb{C} P^{1}$.
Moreover, $[e_{i},e_{j}], [e_{i},e_{k}]\in S_{i,j}^{i,k}$ and $[e_{i},e_{j}], [e_{k},e_{j}]\in S_{i,j}^{j,k}$.
This shows that if $\{i,j\}\cap \{k,l\}\not=\emptyset$ then the fixed points $[e_{i},e_{j}]$ and $[e_{k},e_{l}]$ are on the same invariant $2$-sphere.
Namely, the pair of two distinct sets $\{i,j\}$ and $\{k,l\}$ such that $\{i,j\}\cap \{k,l\}\not=\emptyset$ may be regarded as an edge of the  GKM graph, i.e., an element in $E(\Gamma_{n})$.
We call the edge corresponding to $S_{i,j}^{i,k}$ (resp.\ $S_{i,j}^{j,k}$) as $E_{i,j}^{i,k}\in E(\Gamma_{n})$ (resp.\ $E_{i,j}^{j,k}\in E(\Gamma_{n})$).
Note that for all $k\in [n+2]\setminus \{i,j\}$, $E_{i,j}^{i,k}$ and $E_{i,j}^{j,k}$ are out-going edges from $\{i,j\}$.
Since $\dim M_{n}=4n$, the number of out-going edges from $\{i,j\}$ is $2n$.
Hence, the set of all out-going edges from $\{i,j\}$ can be denoted by 
\begin{align*}
E_{\{i,j\}}(\Gamma_{n})=\{
E_{i,j}^{i,k}, E_{i,j}^{j,k}\ |\ k\in [n+2]\setminus \{i,j\} \}.
\end{align*}
Figure~\ref{example2-fig} shows the one-skeleton induced from $G_{2}(\mathbb{C}^{4})$.
\begin{figure}[h]
\begin{center}
\includegraphics[width=150pt,clip]{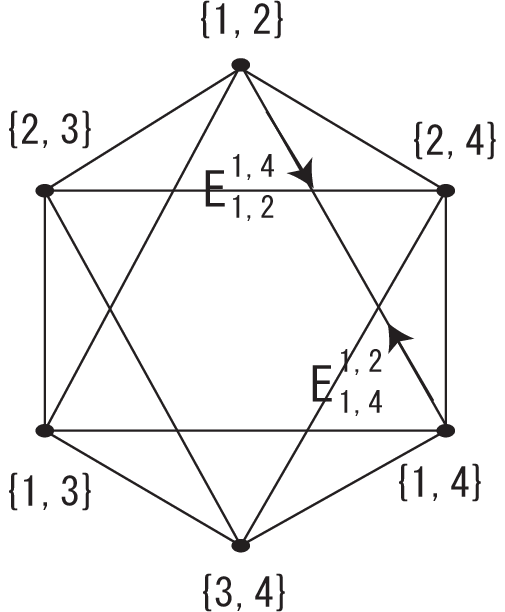}
\caption{The vertices and edges of the one-skeleton of the Grassmannian $G_{2}(\mathbb{C}^{4})$.}
\label{example2-fig}
\end{center}
\end{figure}

Next we consider the tangential representations around fixed points.
To do that, we use the following notations:
\begin{itemize}
\item the symbol $E(\eta)$ represents the total space of the fibre bundle $\eta$ over $M_{n}$;
\item the symbol $\eta_{p}$ is the restriction of $\eta$ onto $p\in M_{n}$;
\end{itemize}
Recall the structure of the tangent bundle $\tau$ of $M_{n}$.
Let $\epsilon_{\mathbb{C}}^{n+2}$ be the trivial bundle $E(\epsilon_{\mathbb{C}}^{n+2})=M_{n}\times \mathbb{C}^{n+2}\to M_{n}$.
Then, the {\it tautological vector bundle} $\gamma$ over $M_{n}$ is defined as follows:
\begin{align*}
E(\gamma)=\{(V,x)\in M_{n}\times \mathbb{C}^{n+2}\ |\ x\in V\}\to M_{n},
\end{align*}
where the projection of the bundle is just the projection onto the 1st factor.
Note that $\gamma$ is a complex $2$-dimensional vector bundle over $M_{n}$ and 
the diagonal $T^{n+1}$-action on $M_{n}\times \mathbb{C}^{n+2}$ induces the $T^{n+1}$-action on $E(\gamma)$; thus we may regard $\gamma$ as the $T^{n+1}$-equivariant vector bundle.
Let $\gamma^{\perp}$ be the normal bundle of $\gamma$ in $\epsilon_{\mathbb{C}}^{n+2}$ (we define the inner product on $\mathbb{C}^{n+2}$ as the standard Hermitian inner product). 
Since $\gamma$ is a complex $2$-dimensional vector bundle, 
$\gamma^{\perp}$ is a complex $n$-dimensional vector bundle.
Moreover, since the $T^{n+1}$-action on $\mathbb{C}^{n+2}$ preserves the standard Hermitian inner product,
the diagonal $T^{n+1}$-action on $M_{n}\times \mathbb{C}^{n+2}$ induces the $T^{n+1}$-action on $\gamma^{\perp}$.

Similar to the case of real Grassmannian (see \cite[Section 5 or proof of Theorem 14.10]{MiSt}), 
the tangent bundle $\tau$ of $M_{n}$ is isomorphic to the complex $2n$-dimensional vector bundle ${\rm Hom}(\gamma,\gamma^{\perp})$.
Therefore, the tangent space around $[e_{i},e_{j}]\in M_{n}^{T}$ may be regarded as 
\begin{align*}
\tau_{[e_{i},e_{j}]}\equiv {\rm Hom}(\gamma_{[e_{i},e_{j}]},\gamma_{[e_{i},e_{j}]}^{\perp}).
\end{align*}
Because the total space of $\gamma_{[e_{i},e_{j}]}$ is $V_{ij}:=\{A_{i}e_{i}+A_{j}e_{j}\ |\ (A_{i},A_{j})\in \mathbb{C}^{2}\}$,
its normal space in $\mathbb{C}^{n+2}$ consists of   
\begin{align*}
V_{ij}^{\perp}=\{\sum_{k\in [n+2]\setminus \{i,j\}}B_{k}e_{k}\ |\ B_{k}\in \mathbb{C}\}.
\end{align*}
Therefore, $\varphi\in {\rm Hom}(V_{ij},V_{ij}^{\perp})$ can be denoted as
\begin{align*}
\varphi(A_{i}e_{i}+A_{j}e_{j})=\sum_{k\in [n+2]\setminus \{i,j\}}f_{k}(A_{i},A_{j})e_{k}
\end{align*}
for some linear map $f_{k}:\mathbb{C}^{2}\to \mathbb{C}$, i.e., 
$f_{k}(A_{i},A_{j})=A_{i}\ell_{ik}+A_{j}\ell_{jk}$ for some $(\ell_{ik},\ell_{jk})\in \mathbb{C}^{2}$ (we will identify $f_{k}$ as $(\ell_{ik},\ell_{jk})$). 
Then, we may regard $\varphi=(f_{k})_{k\in [n+2]\setminus \{i,j\}}\in M(2,n;\mathbb{C})$ as the complex $(2\times n)$-matrix.
Now the $T^{n+1}$-actions on $\gamma$ and $\gamma^{\perp}$ induce the $T^{n+1}$-action on ${\rm Hom}(\gamma,\gamma^{\perp})$ as follows:
for $\varphi\in {\rm Hom}(\gamma_{x},\gamma^{\perp}_{x})$ ($x\in M_{n}$) and $t\in T^{n+1}$, 
\begin{align*}
t\cdot \varphi=t\circ \varphi \circ t^{-1}: \gamma_{tx}\stackrel{t^{-1}}{\longrightarrow} \gamma_{x} \stackrel{\varphi}{\longrightarrow}
\gamma_{x}^{\perp}\stackrel{t}{\longrightarrow} \gamma_{tx}^{\perp}.
\end{align*}
Therefore, on $x=[e_{i},e_{j}]$, we have $t\cdot f_{k}=(t^{-1}_{i}t_{k}\ell_{ik}\ t^{-1}_{j}t_{k}\ell_{jk})$ for $f_{k}=(\ell_{ik}\ \ell_{jk})$, $\varphi=(f_{k})\in M(2,n;\mathbb{C})$ and $t=(t_{1},\ldots, t_{n+1},1)\in T^{n+2}$, i.e., $t_{n+2}=1$.
Hence, 
on the fixed point $[e_{i},e_{j}]\in M_{n}^{T}$, we have the tangential representation as follows:
\begin{align}
\label{weights}
{\rm Hom}(\gamma_{[e_{i},e_{j}]},\gamma_{[e_{i},e_{j}]}^{\perp})\simeq \bigoplus_{k\in [n+2]\setminus \{i,j\}}V(-a_{i}+a_{k})\oplus V(-a_{j}+a_{k}),
\end{align}
where  
$a_{1},\ldots, a_{n+1}$ are the (dual) basis of the dual of Lie algebra $\algt^{*}$ of $T^{n+1}$ and we put $a_{n+2}=0$. 
It is easy to check that the factor $V(-a_{i}+a_{k})$ (resp.\ $V(-a_{j}+a_{k})$) in \eqref{weights} may be regarded as the tangent space on $[e_{i},e_{j}]$ of the invariant $2$-sphere $S_{i,j}^{j,k}$ (resp.\ $S_{i,j}^{i,k}$).
Therefore, the axial function $\alpha_{n}:E(\Gamma_{n})\to H^{2}(BT^{n+1})\simeq \algt^{*}_{\mathbb{Z}}$ is defined as follows:
\begin{align}
\label{axial-fct-on-grass}
\alpha_{n}(E_{i,j}^{i,k})=-a_{j}+a_{k},\quad \alpha_{n}(E_{i,j}^{j,k})=-a_{i}+a_{k}.
\end{align}
By the definition of edges, the orientation reverse edge satisfies $\overline{E_{i,j}^{i,k}}=E_{i,k}^{i,j}$ (resp.\ $\overline{E_{i,j}^{j,k}}=E_{j,k}^{i,j}$). 
Therefore, by the definition of the axial function the following equation holds: 
\begin{align*}
\alpha_{n}(E_{i,k}^{i,j})=-a_{k}+a_{j}=-\alpha_{n}(E_{i,j}^{i,k})\quad ({\rm resp}.\ \alpha_{n}(E_{j,k}^{i,j})=-a_{k}+a_{i}=-\alpha_{n}(E_{i,j}^{j,k})).
\end{align*}

We finally compute a connection on $(\Gamma_{n},\alpha_{n})$.
Put the connection on the edge $E_{i,j}^{i,k}$ as $(\nabla_{n})_{E_{i,j}^{i,k}}=(\nabla_{n})_{i,j}^{i,k}$.
Namely,
\begin{align*}
(\nabla_{n})_{i,j}^{i,k}:E_{\{i,j\}}(\Gamma_{n})\to E_{\{i,k\}}(\Gamma_{n}),
\end{align*}
where 
\begin{align*}
& E_{\{i,j\}}(\Gamma_{n}):= \{
E_{i,j}^{i,l}, E_{i,j}^{j,l}\ |\ l\in [n+2]\setminus \{i,j\} \},\\
& E_{\{i,k\}}(\Gamma_{n}):= \{
E_{i,k}^{i,l}, E_{i,k}^{k,l}\ |\ l\in [n+2]\setminus \{i,k\} \}.
\end{align*}
Note that the set of the weights $\{-a_{i}+a_{k}, -a_{j}+a_{k}\ |\ k\in [n+2]\setminus \{i,j\}\}$
are $3$-independent for all $\{i,j\}\subset [n+2]$ (see \eqref{axial-fct-on-grass}).
Therefore, it follows from Lemma \ref{lem:2-3} that the connection $\nabla_{n}$ on $(\Gamma_{n},\alpha_{n})$ is unique.
This implies that the bijection $(\nabla_{n})_{i,j}^{i,k}$ which satisfies the congruence relation \eqref{cong-rel} is unique.
Hence, by computing the congruence relation \eqref{cong-rel} of $(\Gamma_{n},\alpha_{n})$, the connection must be defined as follows: 
\begin{align*}
& (\nabla_{n})_{i,j}^{i,k}(E_{i,j}^{i,k})=\overline{E_{i,j}^{i,k}}=E_{i,k}^{i,j}, \\
& (\nabla_{n})_{i,j}^{i,k}(E_{i,j}^{i,l})=E_{i,k}^{i,l}\quad \text{for}\ l\in [n+2]\setminus \{i,j,k\}, \\ 
& (\nabla_{n})_{i,j}^{i,k}(E_{i,j}^{j,l})=E_{i,k}^{k,l}\quad \text{for}\ l\in [n+2]\setminus \{i,j,k\}, \\
& (\nabla_{n})_{i,j}^{i,k}(E_{i,j}^{j,k})=E_{i,k}^{j,k}.
\end{align*}
In addition, we also have that 
\begin{align*}
\alpha_{n}(E_{i,k}^{i,l})-\alpha_{n}(E_{i,j}^{i,l})=c_{i,j}^{i,k}(E_{i,j}^{i,l})\alpha_{n}(E_{i,j}^{i,k}) \\
-a_{k}+a_{l}-(-a_{j}+a_{l})=c_{i,j}^{i,k}(E_{i,j}^{i,l})(-a_{j}+a_{k})
\end{align*}
and 
\begin{align*}
\alpha_{n}(E_{i,k}^{k,l})-\alpha_{n}(E_{i,j}^{j,l})=c_{i,j}^{i,k}(E_{i,j}^{j,l})\alpha_{n}(E_{i,j}^{i,k}) \\
-a_{i}+a_{l}-(-a_{i}+a_{l})=c_{i,j}^{i,k}(E_{i,j}^{j,l})(-a_{j}+a_{k}),
\end{align*}
for $l\in [n+2]\setminus\{i,j,k\}$ and  
\begin{align*}
\alpha_{n}(E_{i,k}^{j,k})-\alpha_{n}(E_{i,j}^{j,k})=c_{i,j}^{i,k}(E_{i,j}^{j,k})\alpha_{n}(E_{i,j}^{i,k}) \\
-a_{i}+a_{j}-(-a_{i}+a_{k})=c_{i,j}^{i,k}(E_{i,j}^{j,k})(-a_{j}+a_{k}),
\end{align*}
for
some integers (congruence coefficients) $c_{i,j}^{i,k}(E_{i,j}^{i,l}), c_{i,j}^{i,k}(E_{i,j}^{j,l})$.
Therefore, 
together with Lemma~\ref{lem:2-4}, 
the congruence coefficients are 
\begin{align*}
& c_{i,j}^{i,k}(E_{i,j}^{i,k})=-2, \\ 
& c_{i,j}^{i,k}(E_{i,j}^{i,l})=-1 \quad {\text for}\ l\in [n+2]\setminus \{i,j,k\}, \\
& c_{i,j}^{i,k}(E_{i,j}^{j,l})=0\quad {\text for}\ l\in [n+2]\setminus \{i,j,k\}, \\ 
& c_{i,j}^{i,k}(E_{i,j}^{j,k})=-1.
\end{align*}

In summary we have that 
\begin{proposition}
\label{prop:5.2}
Let $\Gamma_{n}=(V(\Gamma_{n}),E(\Gamma_{n}))$ be the abstract graph defined by 
\begin{itemize}
\item the set of vertices $V(\Gamma_{n})$ consists of all $\{i,j\}$ in $[n+2]$ for $i\not=j$;
\item the set of edges $E(\Gamma_{n})$ consists of all pairs of distinct vertices $\{i,j\}$, $\{k,l\}$ such that $\{i,j\}\cap \{k,l\}\not=\emptyset$.
\end{itemize}
Define its axial function as $\alpha_{n}:E(\Gamma_{n})\to H^{2}(BT^{n+1})$ in \eqref{axial-fct-on-grass}.
Then, the connection $\nabla_{n}$ is uniquely determined (as above) and 
the triple $(\Gamma_{n},\alpha_{n},\nabla_{n})$ is the $(2n,n+1)$-type GKM graph.
\end{proposition}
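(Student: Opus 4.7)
The plan is to verify four things in order: (i) $\Gamma_n$ has the claimed combinatorial structure and is a connected $2n$-valent graph; (ii) the function $\alpha_n$ defined in~\eqref{axial-fct-on-grass} satisfies axioms (1), (2) and the effectiveness condition (4), and in fact the set $(\alpha_n)_{(\{i,j\})}$ is three-independent at every vertex; (iii) by Lemma~\ref{lem:2-2}, the connection $\nabla_n$ is then uniquely determined; (iv) the explicit $\nabla_n$ written down in the excerpt indeed satisfies the congruence relation~\eqref{cong-rel}, confirming axiom (3). Once these are established, $(\Gamma_n,\alpha_n,\nabla_n)$ is by definition an $(m,n+1)$-type GKM graph with $m=2n$.

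For step (i), I would read off from the description that each vertex $\{i,j\}$ has exactly the $2(n+2-2)=2n$ out-going edges $E_{i,j}^{i,k}$ and $E_{i,j}^{j,k}$ indexed by $k\in [n+2]\setminus\{i,j\}$, and observe that any two $2$-subsets of $[n+2]$ are joined by a path that swaps one index at a time, so $\Gamma_n$ is connected.

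Step (ii) splits into two pieces. Axiom (1), namely $\alpha_n(e)=-\alpha_n(\overline{e})$, follows from the orientation-reversal rules $\overline{E_{i,j}^{i,k}}=E_{i,k}^{i,j}$ and $\overline{E_{i,j}^{j,k}}=E_{j,k}^{i,j}$ together with a one-line computation from~\eqref{axial-fct-on-grass}. For axioms (2) and (4), together with the three-independence needed in step (iii), I would argue by a short case analysis on how many weights of the form $-a_i+a_k$ versus $-a_j+a_k$ appear among a chosen triple at $\{i,j\}$, keeping in mind the convention $a_{n+2}=0$; distinctness of the involved indices in $[n+2]$ then forces $\mathbb{Z}$-linear independence, and picking any $n+1$ weights whose indices exhaust $[n+2]\setminus\{i,j\}$ together with one of opposite type exhibits a spanning family. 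This case-check is where the only genuine content lies and is the main obstacle; the rest is formal.

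For steps (iii) and (iv), once three-independence is known, Lemma~\ref{lem:2-2} gives uniqueness of $\nabla_n$, and one only needs to exhibit a bijection on out-going edges satisfying~\eqref{cong-rel}. The four-case list displayed in the excerpt does exactly this: direct substitution into $\alpha_n(\nabla_n(e'))-\alpha_n(e')=c\cdot\alpha_n(e)$ yields the stated congruence coefficients $-2$, $-1$, $0$, $-1$, respectively (with $-2$ recovered also from Lemma~\ref{lem:2-3}). This concludes the verification that $(\Gamma_n,\alpha_n,\nabla_n)$ is a $(2n,n+1)$-type GKM graph.
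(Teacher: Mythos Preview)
Your proposal is correct and matches the paper's approach. Proposition~\ref{prop:5.2} is stated in the paper as a summary (``In summary we have that'') of the computations carried out throughout Section~\ref{sect:5.1}: the paper identifies the vertices and edges, writes down $\alpha_n$ via the tangential representation, asserts three-independence of the weights at each vertex, invokes Lemma~\ref{lem:2-2} for uniqueness of $\nabla_n$, and then verifies the congruence relation case by case to obtain the coefficients $-2,-1,0,-1$. Your steps (i)--(iv) reproduce exactly this structure, with the only difference that you frame it as a direct combinatorial verification rather than as a byproduct of the geometric derivation; the actual checks (orientation-reversal, three-independence by case analysis, explicit $\nabla_n$) are the same.
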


\begin{remark}
The graph in Proposition~\ref{prop:5.2} is known as the {\it Johnson graph} $J(n+2,2)$.
The 1st GKM graph in Figure~\ref{examples} shows the case when $n=1$, i.e., the Johnson graph $J(3,2)$, and 
the GKM graph in Figure~\ref{example3-fig} 
 shows the case when $n=2$, i.e., the Johnson graph $J(4,2)$.
It is known that the one-skeleton of the general Grassmannian $G_{k}(\mathbb{C}^{n+k})$ (for $k\ge 1$) is the Johnson graph $J(n+k,k)$ (see \cite{BGH}).
\end{remark}

\begin{figure}[h]
\begin{center}
\includegraphics[width=250pt,clip]{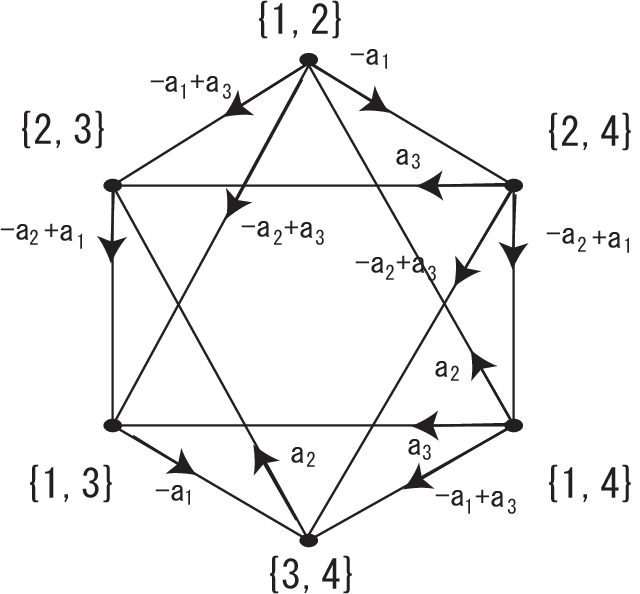}
\caption{The GKM graph $(\Gamma_{2},\alpha_{2},\nabla_{2})$ of $(G_{2}(\mathbb{C}^4), T^3)$.}
\label{example3-fig}
\end{center}
\end{figure}

\subsection{The 2nd main result}
\label{sect:5.2}

Because we fix the axial function $\alpha_{n}$ on $\Gamma_{n}$ and its connection $\nabla_{n}$ is unique, 
we may write the GKM graph $(\Gamma_{n},\alpha_{n},\nabla_{n})$ of $(M_{n},T^{n+1})$ as $\Gamma_{n}$ for simplicity; therefore, we denote 
the group of axial functions $\mathcal{A}(\Gamma_{n},\alpha_{n},\nabla_{n})$ as $\mathcal{A}(\Gamma_{n})$.
This final section is devoted to the proof of the following theorem:
\begin{theorem}
\label{main:final}
The group of axial functions $\mathcal{A}(\Gamma_{n})$ is isomorphic to $\mathbb{Z}^{n+1}$.
\end{theorem}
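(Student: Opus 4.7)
The plan is to combine Lemma~\ref{lem:3-2} with an explicit analysis of cycle constraints on the restriction map $\rho_{p_0}$ at a chosen base vertex $p_0$. By Lemma~\ref{lem:3-2} applied to the $(2n, n+1)$-type GKM graph $\Gamma_n$, we already have $n+1 \le \mathrm{rk}\,\mathcal{A}(\Gamma_n) \le 2n$, so it suffices to establish the upper bound $\mathrm{rk}\,\mathcal{A}(\Gamma_n) \le n+1$.

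First, I would fix $p_0 = \{1,2\}$ and use the injectivity of the restriction homomorphism $\rho_{p_0}:\mathcal{A}(\Gamma_n) \hookrightarrow \mathbb{Z}E_{p_0}(\Gamma_n) \cong \mathbb{Z}^{2n}$ (see the proof of Lemma~\ref{lem:3-2}). For $f \in \mathcal{A}(\Gamma_n)$, write
\[
f(p_0) \;=\; \sum_{k=3}^{n+2} x_k\,E_{1,2}^{1,k} \;+\; \sum_{k=3}^{n+2} y_k\,E_{1,2}^{2,k}.
\]
The claim I want to establish is that the image of $\rho_{p_0}$ lies in the sublattice of $\mathbb{Z}^{2n}$ defined by the $n-1$ linear equations $x_k - y_k = x_3 - y_3$ for $k = 4, \ldots, n+2$. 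Geometrically this reflects the identity $\alpha_n(E_{1,2}^{1,k}) - \alpha_n(E_{1,2}^{2,k}) = a_1 - a_2$, which is independent of $k$.

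Second, for each such $k$ I would derive the relation by traversing the triangle $\{1,2\} \to \{1,k\} \to \{2,k\} \to \{1,2\}$ in $\Gamma_n$ (a genuine $3$-cycle in the Johnson graph since every pair of vertices shares an index). At each step, I compute the value of $f$ at the next vertex using the defining relation~\eqref{def-rel} together with Lemma~\ref{lem:2-11} and the explicit connection $\nabla_n$ and congruence vectors displayed at the end of Section~\ref{sect:5.1}. Imposing that $f(\{1,2\})$ obtained after the full loop agrees with the original value of $f(p_0)$ should extract exactly the relation $x_k - y_k = x_3 - y_3$ in an appropriate coordinate.

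Third, since these $n-1$ relations are visibly independent, $\mathrm{rk}\,\rho_{p_0}(\mathcal{A}(\Gamma_n)) \le 2n - (n-1) = n+1$, and combined with the lower bound this forces $\mathrm{rk}\,\mathcal{A}(\Gamma_n) = n+1$. Moreover, the rank $n+1$ sublattice is spanned by the explicit elements $f_1,\ldots,f_{n+1} \in \mathcal{A}(\Gamma_n)$ built from the $n+1$ coordinate components of the axial function $\alpha_n$ of~\eqref{axial-fct-on-grass}, as in the proof of Lemma~\ref{lem:3-2}; these already saturate the bound, so they form a $\mathbb{Z}$-basis.

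The main obstacle will be the bookkeeping in the second step. Transporting $f$ along the triangle requires tracking, at each traversed edge $e$, the permutation matrix $N_e$, the congruence vector $c_{(\Gamma_n,\alpha_n,\nabla_n)}(\bar{e})$, and especially the scalar $f(q)_{\bar e} = -f(p)_e$ from Lemma~\ref{lem:2-11}, whose identity as a coordinate of $f$ changes vertex by vertex. Because the values $c_{i,j}^{i,k}(E_{i,j}^{i,l}) = -1$, $c_{i,j}^{i,k}(E_{i,j}^{j,l}) = 0$ differ between the ``same-index'' and ``other-index'' out-going directions, one must be careful to pair each component of $f$ with the correct congruence coefficient in every step of the traversal.
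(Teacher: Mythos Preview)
Your proposal is correct and follows essentially the same route as the paper's proof: restrict to a base vertex, use the injectivity of $\rho_{p_0}$, and extract $n-1$ independent linear relations among the $2n$ coordinates of $f(p_0)$ by exploiting triangles in the Johnson graph together with the explicit connection and congruence coefficients of Section~\ref{sect:5.1}. The only cosmetic differences are the choice of base vertex ($\{1,2\}$ versus the paper's $\{n+1,n+2\}$) and the bookkeeping: you propose tracing the full monodromy around a varying triangle $\{1,2\},\{1,k\},\{2,k\}$ for each $k$, whereas the paper transports $f$ along two edges of a \emph{single fixed} triangle $\{n+1,n+2\},\{1,n+1\},\{1,n+2\}$ and then reads off the relation from the defining equation along the third edge, applied to the coordinates coming from varying external edges $E_{1,n+1}^{1,n-j}$ and $E_{1,n+2}^{1,n-j}$; this yields directly $x_{2n-j}-x_{n-j}=x_{n+1}-x_{1}$, which is your relation after relabelling.

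One small caveat: your closing remark that $f_1,\ldots,f_{n+1}$ ``saturate the bound, so they form a $\mathbb{Z}$-basis'' is not needed for the theorem and is not automatic---a rank-$(n+1)$ subgroup of a rank-$(n+1)$ free abelian group can have finite index greater than one. The theorem only asserts $\mathcal{A}(\Gamma_n)\simeq\mathbb{Z}^{n+1}$, which follows immediately from freeness plus the rank computation; you should drop the stronger basis claim or argue it separately.
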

When $n=1$, the GKM graph $\Gamma_{1}$ is the $(2,2)$-type GKM graph (which is the 1st GKM graph in Figure~\ref{examples}).
Therefore, by Theorem~\ref{thm-main}, we have that $\mathcal{A}(\Gamma_{1})\simeq \mathbb{Z}^{2}$.
Hence, we may assume that $n\ge 2$.

To prove Theorem~\ref{main:final}, 
we first choose an order on $E_{\{i,j\}}(\Gamma_{n})$ for $i,j \in [n+2]$ as follows (see Figure~\ref{order-graph} for $n=2$):
\begin{itemize}
\item $E_{i,j}^{i,k}\prec E_{i,j}^{j,l}$ if $i<j$, where $k,l\in [n+2]\setminus \{i,j\}$;
\item $E_{i,j}^{i,k}\prec E_{i,j}^{i,l}$ if $k<l$, where $k,l\in [n+2]\setminus \{i,j\}$.
\end{itemize}
\begin{figure}[h]
\begin{center}
\includegraphics[width=150pt,clip]{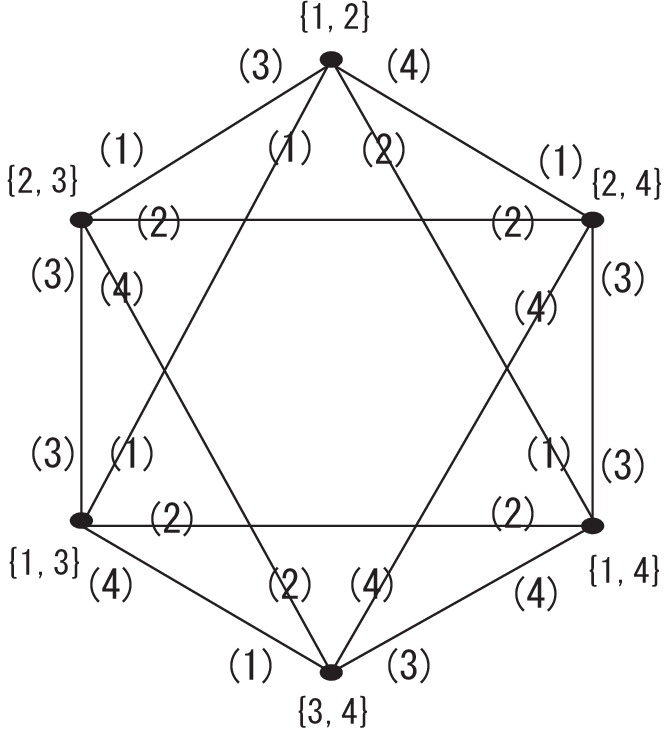}
\caption{The order of out-going edges on each vertex of $\Gamma_{2}$. For example, around the vertex $\{1,3\}$, Figure shows that 
$E_{1,3}^{1,2}\prec E_{1,3}^{1,4}\prec E_{1,3}^{2,3}\prec E_{1,3}^{3,4}$.}
\label{order-graph}
\end{center}
\end{figure}

Take $f\in \mathcal{A}(\Gamma_{n})$ and put 
\begin{align*}
f(\{n+1,n+2\})=
\begin{pmatrix}
x_{1} \\
\vdots \\
x_{2n}
\end{pmatrix}
\end{align*}
with respect to the order on $E_{\{n+1,n+2\}}(\Gamma_{n})$ defined as before, i.e.,
\begin{align*}
E^{1,n+1}_{n+1,n+2}\prec \cdots \prec E^{n,n+1}_{n+1,n+2}\prec E^{1,n+2}_{n+1,n+2} \prec\cdots \prec E^{n,n+2}_{n+1,n+2}. 
\end{align*}
More precisely, using the notation $f(p)_{e}$ defined in Section~\ref{sect:2.3} for $p\in V(\Gamma_{n})$ and $e\in E_{p}(\Gamma_{n})$,
we define the following correspondence between edges and integers (variables):
\begin{align*}
\begin{array}{rcl}
E_{n+1,n+2}^{1,n+1} & \mapsto & f(\{n+1,n+2\})_{E_{n+1,n+2}^{1,n+1}}=x_{1}; \\
& \vdots & \\
E_{n+1,n+2}^{n,n+1} & \mapsto & f(\{n+1,n+2\})_{E_{n+1,n+2}^{n,n+1}}=x_{n}; \\
E_{n+1,n+2}^{1,n+2} & \mapsto & f(\{n+1,n+2\})_{E_{n+1,n+2}^{1,n+2}}=x_{n+1}; \\
& \vdots & \\
E_{n+1,n+2}^{n,n+2} & \mapsto & f(\{n+1,n+2\})_{E_{n+1,n+2}^{n,n+2}}=x_{2n}.
\end{array}
\end{align*} 
Then, by the connectedness of $\Gamma_{n}$ and the definition of $\mathcal{A}(\Gamma_{n})$, 
the vector $f(\{i,j\})$ is denoted by the variables $x_{1},\ldots, x_{2n}$ for all $\{i,j\}$'s in $V(\Gamma_{n})$.
This shows that ${\rm rk}\ \mathcal{A}(\Gamma_{n})\le 2n$ (this is also known from Theorem~\ref{thm-main} and the fact that $\Gamma_{n}$ is a $(2n,n+1)$-type GKM graph).
By Theorem~\ref{thm-main}, we also have ${\rm rk}\ \mathcal{A}(\Gamma_{n})\ge n+1$. 
Therefore, in order to prove Theorem~\ref{main:final}, 
it is enough to prove that the variables $x_{n+2},\ldots, x_{2n}$ can be denoted by the other variables 
$x_{1},\ldots, x_{n+1}$.
We shall prove that the following lemma holds. 
\begin{lemma}\label{finallem}
For $j=0,\ldots, n-2$ $(n\ge 2)$, the following equation holds:
\begin{align*}
x_{2n-j}=-x_{1}+x_{n-j}+x_{n+1}.
\end{align*}
\end{lemma}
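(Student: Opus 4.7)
My plan is to exploit the fact (noted in Remark~\ref{reason-name}) that any $f\in\mathcal{A}(\Gamma_n)$ is equivalent to a $\mathbb{Z}$-valued function $g$ on oriented edges: set $g(e):=f(i(e))_e$, so that $g(\bar e)=-g(e)$ by Lemma~\ref{lem:2-11} and $g$ satisfies the congruence relation
\begin{align*}
g(\nabla_e(e'))-g(e')=c_e(e')\,g(e)
\end{align*}
inherited from \eqref{cong-rel}. In this language the variables $x_1,\dots,x_{2n}$ are simply the $g$-values on the $2n$ out-going edges at $\{n+1,n+2\}$, in the order fixed just before Lemma~\ref{finallem}.

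Fix $k\in\{2,\dots,n\}$; I will show $x_{n+k}=-x_1+x_k+x_{n+1}$, which is Lemma~\ref{finallem} with $j=n-k$. The first step is to propagate $g$ to the neighbour $\{1,n+1\}$ by applying the congruence relation at the edge $E^{1,n+1}_{n+1,n+2}$. Using the explicit description of $\nabla_n$ and its congruence coefficients from Subsection~\ref{sect:5.1} (with $i=n+1$, $j=n+2$), a short computation gives in particular
\begin{align*}
g(E^{n+1,k}_{1,n+1}) = x_k-x_1, \qquad g(E^{1,k}_{1,n+1}) = x_{n+k}.
\end{align*}
The analogous propagation to $\{k,n+1\}$ along $E^{k,n+1}_{n+1,n+2}$ yields
\begin{align*}
g(E^{1,k}_{k,n+1}) = x_{n+1},
\end{align*}
and skew-symmetry then gives $g(E^{n+1,k}_{1,k})=-x_{n+1}$.

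The closing step is a single consistency check at the common neighbour $\{1,k\}$. I would apply the congruence relation at the edge $E^{1,k}_{1,n+1}$ to the element $E^{n+1,k}_{1,n+1}$; according to the connection formulas (now with $i=1$, $j=n+1$) this element maps to $E^{n+1,k}_{1,k}$ with congruence coefficient $-1$, whence
\begin{align*}
g(E^{n+1,k}_{1,k}) = g(E^{n+1,k}_{1,n+1})-g(E^{1,k}_{1,n+1}) = (x_k-x_1)-x_{n+k}.
\end{align*}
Equating this with the value $-x_{n+1}$ obtained in the previous step produces precisely $x_{n+k}=-x_1+x_k+x_{n+1}$.

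The main obstacle is purely bookkeeping: since the formulas for $\nabla_n$ and for the congruence coefficients in Subsection~\ref{sect:5.1} depend on which element of $[n+2]$ is shared by source and target, one must be vigilant about the role each index plays when propagating $g$. Once the indices are set up carefully, however, the desired linear relation drops out of a single application of the congruence relation at the aptly chosen edge $E^{1,k}_{1,n+1}$, exploiting the triangle $\{n+1,n+2\}$, $\{1,n+1\}$, $\{k,n+1\}$, $\{1,k\}$ (more precisely, comparison of the two paths from $\{n+1,n+2\}$ to $\{1,k\}$ through $\{1,n+1\}$ and through $\{k,n+1\}$).
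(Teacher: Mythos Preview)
Your argument is correct and follows essentially the same strategy as the paper: propagate the values of $f$ from $\{n+1,n+2\}$ along two edges using the explicit connection and congruence coefficients of Subsection~\ref{sect:5.1}, then close up with one further congruence relation to extract the desired linear dependence. The only difference is the choice of auxiliary vertices: the paper propagates to $\{1,n+1\}$ and $\{1,n+2\}$ and closes along the edge $E^{1,n+2}_{1,n+1}$ (using the same closing edge for every $j$), whereas you propagate to $\{1,n+1\}$ and $\{k,n+1\}$ and close at $\{1,k\}$ via $E^{1,k}_{1,n+1}$ (a different closing edge for each $k$). Both routes use one coefficient-$0$ step and one coefficient-$(-1)$ step and yield the same relation with the same amount of bookkeeping.
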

\begin{proof}
Recall the definition of the connection $\nabla_{n}$ in Section~\ref{sect:5.1}.
There is the triangle GKM subgraph in $\Gamma_{n}$ (i.e., the subgraph closed under the connection) 
which spanned by the vertices $\{n+1,n+2\},\ \{1,n+1\}, \{1,n+2\}$ (see Figure~\ref{key-arguments1}).
We first show the variables corresponding edges in this triangle as in Figure~\ref{key-arguments1}.
\begin{figure}[h]
\begin{center}
\includegraphics[width=170pt,clip]{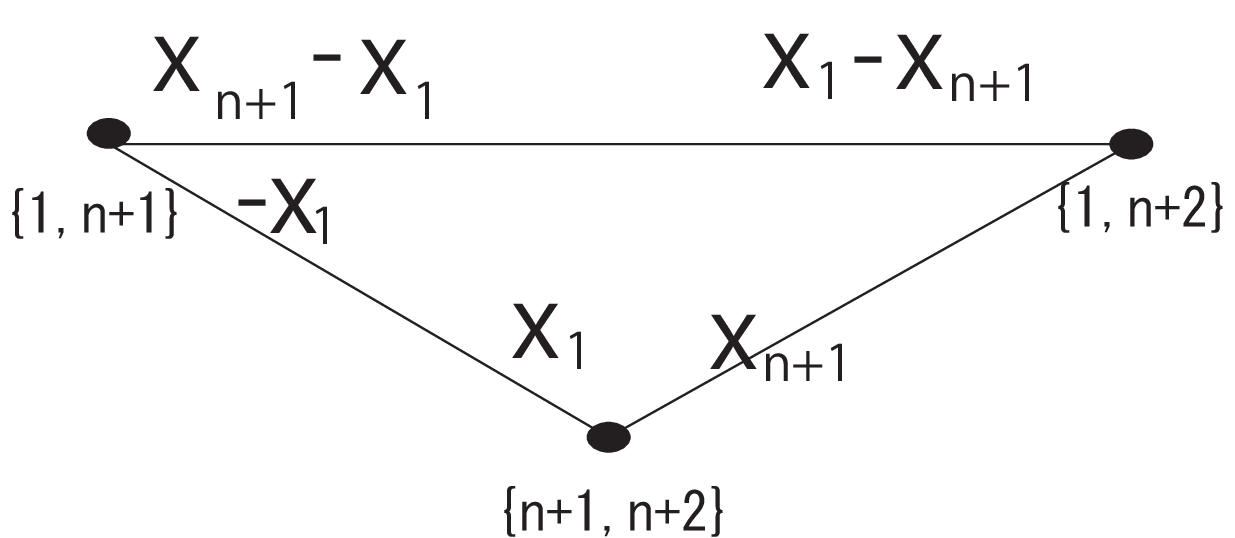}
\caption{The triangle GKM subgraph with corresponding variables on edges.}
\label{key-arguments1}
\end{center}
\end{figure}

We assumed $f(\{n+1,n+2\})_{E_{n+1,n+2}^{1,n+1}}=x_{1}$ (and $f(\{n+1,n+2\})_{E_{n+1,n+2}^{1,n+2}}=x_{n+1}$).
So, by Lemma~\ref{lem:2-12}, 
\begin{align*}
f(\{1,n+1\})_{E_{1,n+1}^{n+1,n+2}}=-x_{1}.
\end{align*}
Moreover, the connection 
\begin{align*}
(\nabla_{n})_{n+1,n+2}^{1,n+1}(E_{n+1,n+2}^{1,n+2})=E_{1,n+1}^{1,n+2}
\end{align*} 
and 
the congruence coefficient 
\begin{align*}
c_{n+1,n+2}^{1,n+1}(E_{n+1,n+2}^{1,n+2})=-1.
\end{align*}
Therefore, we have the following equation by the definition of $f\in \mathcal{A}(\Gamma_{n})$:
\begin{align*}
x_{n+1}-f(\{1,n+1\})_{E_{1,n+1}^{1,n+2}}=(-1)\times (-x_{1}).
\end{align*}
Hence, we have $f(\{1,n+1\})_{E_{1,n+1}^{1,n+2}}=x_{n+1}-x_{1}$, i.e., 
the correspondence $E_{1,n+1}^{1,n+2}\mapsto x_{n+1}-x_{1}$.
Together with  Lemma~\ref{lem:2-12} we also have the correspondence 
\begin{align*}
E_{1,n+2}^{1,n+1}\mapsto x_{1}-x_{n+1}.
\end{align*}
This establishes the variables in Figure~\ref{key-arguments1}.

We next consider the subgraph drawn in Figure~\ref{key-arguments2} and compute the corresponding variables on edges in this subgraph as in Figure~\ref{key-arguments2}.
\begin{figure}[h]
\begin{center}
\includegraphics[width=180pt,clip]{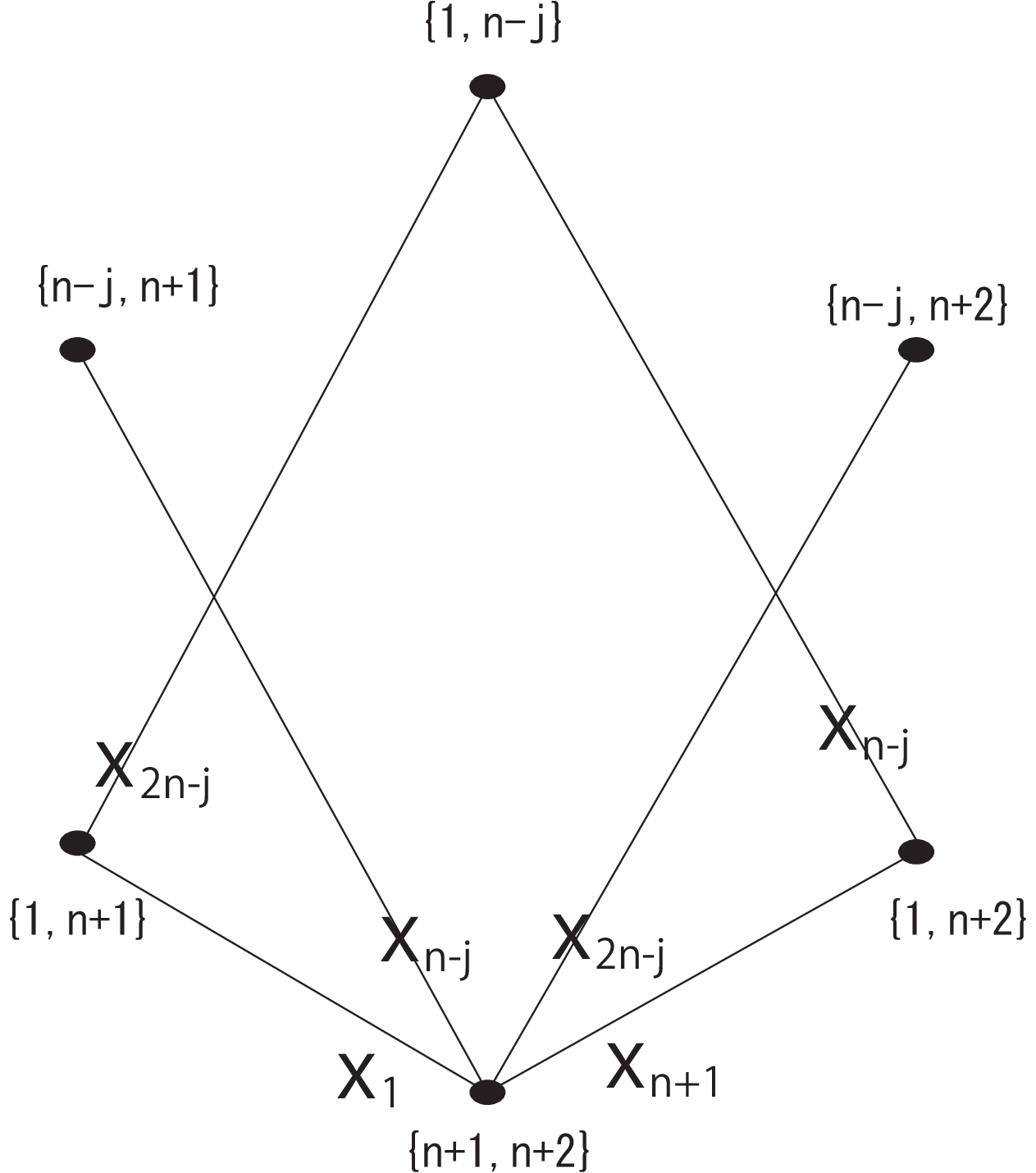}
\caption{The subgraph with corresponding variables on edges.}
\label{key-arguments2}
\end{center}
\end{figure}

We assumed $f(\{n+1,n+2\})_{E_{n+1,n+2}^{n-j,n+2}}=x_{2n-j}$ (and $f(\{n+1,n+2\})_{E_{n+1,n+2}^{n-j,n+1}}=x_{n-j}$) for $0\le j\le n-2$.
Because $(\nabla_{n})_{n+1,n+2}^{1,n+1}(E_{n+1,n+2}^{n-j,n+2})=E_{1,n+1}^{1,n-j}$ and $c_{n+1,n+2}^{1,n+1}(E_{n+1,n+2}^{n-j,n+2})=0$,
we have the correspondence 
\begin{align*}
E_{1,n+1}^{1,n-j}\mapsto x_{2n-j}.
\end{align*}
Similarly, because $(\nabla_{n})_{n+1,n+2}^{1,n+2}(E_{n+1,n+2}^{n-j,n+1})=E_{1,n+2}^{1,n-j}$ and $c_{n+1,n+2}^{1,n+2}(E_{n+1,n+2}^{n-j,n+2})=0$,
we have the correspondence 
\begin{align*}
E_{1,n+2}^{1,n-j}\mapsto x_{n-j}.
\end{align*}
This establishes the variables in Figure~\ref{key-arguments2}.

By Figure~\ref{key-arguments1} and Figure~\ref{key-arguments2}, 
we have the triangle GKM subgraph with variables as in Figure~\ref{key-arguments3}.
\begin{figure}[h]
\begin{center}
\includegraphics[width=150pt,clip]{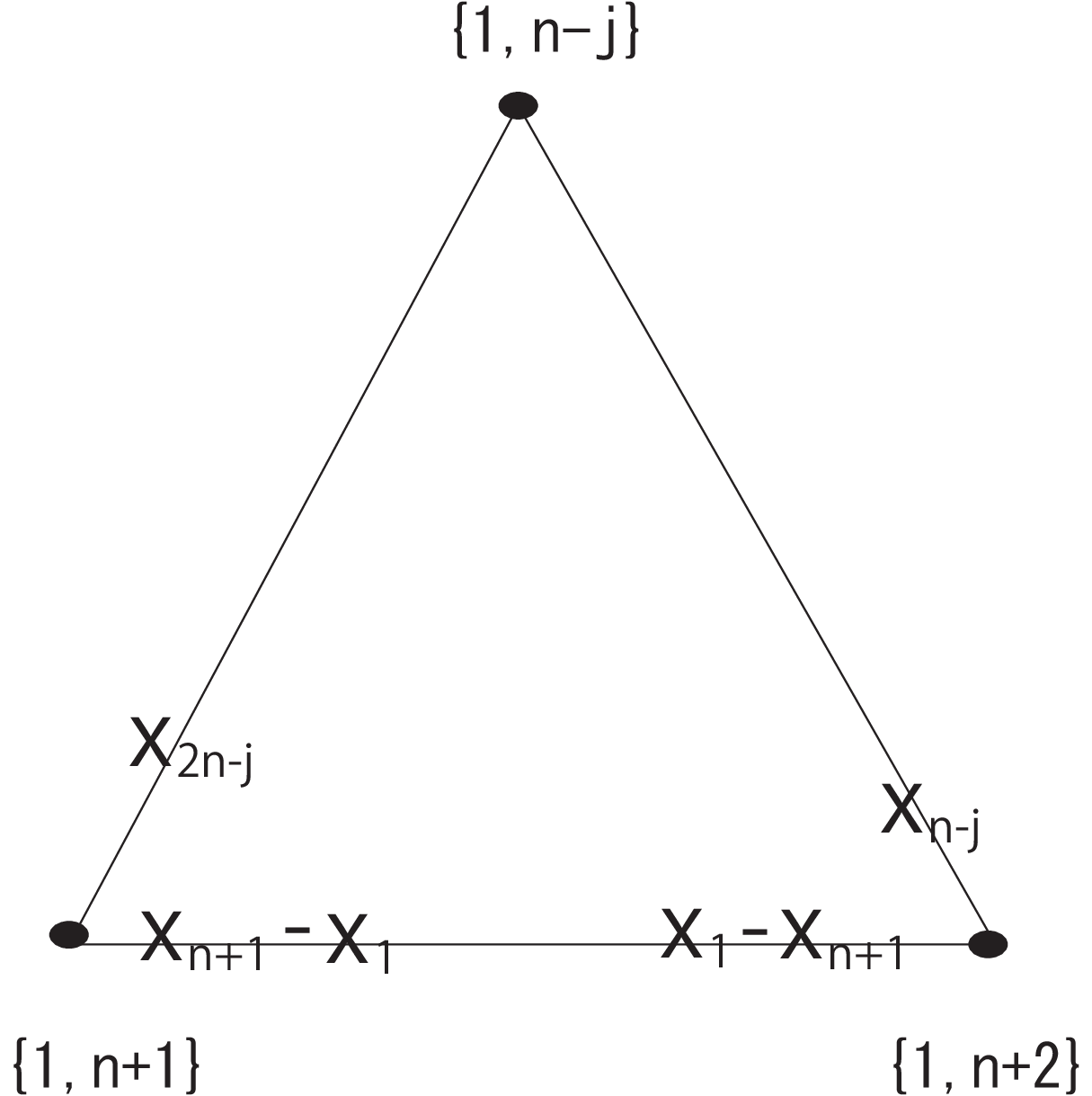}
\caption{The triangle GKM subgraph with corresponding variables on edges.}
\label{key-arguments3}
\end{center}
\end{figure}

In Figure~\ref{key-arguments3}, 
$(\nabla_{n})_{1,n+1}^{1,n+2}(E_{1,n+1}^{1,n-j})=E_{1,n+2}^{1,n-j}$ and $c_{1,n+1}^{1,n+2}(E_{1,n+1}^{1,n-j})=-1$.
Therefore, by definition of $f\in \mathcal{A}(\Gamma_{n})$,
we have the equation 
\begin{align*}
x_{2n-j}-x_{n-j}=-1(x_{1}-x_{n+1}).
\end{align*} 
This establishes that 
$x_{2n-j}=-x_{1}+x_{n-j}+x_{n+1}$.
\end{proof}

Consequently, this shows Theorem \ref{main:final}.
Therefore, by Corollary~\ref{main:2}, we have Proposition~\ref{main:3}.

\section*{Acknowledgement}
The author would like to thank to Dong Youp Suh, Jongbaek Song and Eunjeong Lee for their valuable comments and their careful reading.
This work was supported by JSPS KAKENHI Grant Number 15K17531, 24224002, 17K14196.

\end{document}